\documentclass[a4paper]{amsart}
\usepackage[english]{babel}
\usepackage{amsmath,amssymb,amsthm}

\usepackage{graphicx}

\parskip=1ex
\textwidth=16cm
 \hoffset=-1.5cm

\usepackage[pdftex]{color}
\usepackage[bookmarks=true,hyperindex,pdftex,colorlinks,citecolor=blue]{hyperref}

\theoremstyle{plain}
\newtheorem{theorem}{Theorem}[section]
\newtheorem{corollary}[theorem]{Corollary}
\newtheorem{prop}[theorem]{Proposition}
\newtheorem{proposition}[theorem]{Proposition}

\theoremstyle{definition}

\newtheorem{example}[theorem]{Example}

\newtheorem{definition}[theorem]{Definition}

 \DeclareMathOperator{\re}{Re\,}

 \DeclareMathOperator{\dist}{dist\,}

\newcommand{\D}{\mathbb{D}}
\newcommand{\K}{\mathbb{K}}

\newcommand{\R}{\mathbb{R}}
\newcommand{\N}{\mathbb{N}}

\newcommand{\inner}[1]{\ensuremath{\left\langle #1\right\rangle}}

\newcommand{\norm}[1]{\ensuremath{\lVert#1\rVert}}

\newcommand{\nor}[1]{\ensuremath{\left\|#1\right\|}}

\newcommand{\eps}{\varepsilon}

\newcommand{\caconv}{\overline{\mathrm{aconv}}}

\renewcommand{\leq}{\leqslant}
\renewcommand{\geq}{\geqslant}

\begin{document}

\dedicatory{Dedicated to Richard M. Aron on the occasion of his 70th birthday}

\title[On Banach spaces with the Approximate hyperplane series property]{On Banach spaces with the \\ approximate hyperplane series property}

\author[Choi]{Yun Sung Choi}
\address[Choi]{Department of Mathematics, POSTECH, Pohang (790-784), Republic of Korea}
\email{\texttt{mathchoi@postech.ac.kr}}

\author[Kim]{Sun Kwang Kim}
\address[Kim]{Department of Mathematics, Kyonggi University , Suwon 443-760, Republic of Korea}
\email{\texttt{sunkwang@kgu.ac.kr}}

\author[Lee]{Han Ju Lee}
\address[Lee]{Department of Mathematics Education,
Dongguk University - Seoul, 100-715 Seoul, Republic of Korea}
\email{\texttt{hanjulee@dongguk.edu}}

\author[Mart\'{\i}n]{Miguel Mart\'{\i}n}
\address[Mart\'{\i}n]{Departamento de An\'{a}lisis Matem\'{a}tico,
Facultad de Ciencias,
Universidad de Granada,
E-18071 Granada, Spain}
\email{\texttt{mmartins@ugr.es}}

\subjclass[2000]{Primary 46B20; Secondary 46B04, 46B22}

\keywords{Banach space, approximation, norm-attaining operators, Bishop-Phelps-Bollob\'{a}s theorem.}
\thanks{The first author was supported by by Basic Science Research Program
through the National Research Foundation of Korea (NRF) funded by the
Ministry of Education (No.2010-0008543 and No. 2013053914).
The third author partially supported by Basic Science Research Program through the National Research Foundation of Korea(NRF) funded by the Ministry of Education, Science and Technology (NRF-2012R1A1A1006869).
The fourth author partially supported by Spanish MICINN and FEDER project no.\ MTM2012-31755 and Junta de Andaluc\'{\i}a and FEDER grants FQM-185 and P09-FQM-4911.}

\begin{abstract}
We present a sufficient condition for a Banach space to have the approximate hyperplane series property (AHSP) which actually covers all known examples. We use this property to get a stability result to vector-valued spaces of integrable functions. On the other hand, the study of a possible Bishop-Phelps-Bollob\'{a}s version of a classical result of V.~Zizler leads to a new characterization of the AHSP for dual spaces in terms of $w^*$-continuous operators and other related results.
\end{abstract}

\date{July 25th, 2014}

\maketitle

\section{Introduction}

Given two (real or complex) Banach spaces $X$ and $Y$, we write $L(X,Y)$ for the Banach space of all bounded linear operators from $X$ into $Y$, endowed with the operator norm. We use the symbol $B_X$ and $S_X$ to denote, respectively, the closed unit ball and the unit sphere of $X$. The topological dual of $X$ is denote by $X^*$. An operator $T\in L(X, Y)$ is said to be \emph{norm-attaining} if there is $x\in S_X$ such that $\|Tx\|= \|T\|$. We write $NA(X,Y)$ for the subset of $L(X,Y)$ consisting on all norm-attaining operators. After the celebrated Bishop-Phelps theorem, which says that the set of all norm-attaining linear functionals is dense in $X^*$, it was a natural question to study whether the set of norm-attaining linear operators is dense in $L(X,Y)$ for all Banach spaces $X$ and $Y$. In 1963, Lindenstrauss \cite{Lindens} gave a negative answer to this question and showed, among many other things, that $NA(\ell_1, Y)$ is dense in $L(\ell_1,Y)$ for all Banach spaces $Y$.

Bollob\'as sharpened the Bishop-Phelps theorem to which is now called the Bishop-Phelps-Bollob\'as theorem: given a Banach space $X$, if $x^*\in S_{X^*}$ and $x\in S_X$ satisfy $|x^*(x) - 1| \leq \eps^2 /2$ for some $0<\eps<1/2$, then there exist $y\in S_X$ and $y^*\in S_{X^*}$ such that $y^*(y)=1$, $\|y-x\|<\eps + \eps^2$ and $\|y^*-x^*\|\leq \eps$. See \cite{C-K-M-M-R} for a sharper version of this result. In 2008, Acosta, Aron,  Garc\'ia and Maestre \cite{AAGM2} introduced the notion of Bishop-Phelps-Bollob\'as theorem for operators. Precisely, a pair $(X, Y)$ of Banach spaces is said to have the \emph{Bishop-Phleps-Bollob\'as property} (\emph{BPBp} in short) if given $\eps>0$,  there is $\eta(\eps)>0$ such that whenever $T\in L(X,Y)$ with $\|T\|=1$ and $x\in S_X$ satisfy $\|Tx\|>1-\eta(\eps)$, there exist $S\in L(X, Y)$ and $y\in S_X$ such that
\[
\|S- T\|<\eps, \ \ \ \|x-y\|<\eps, \ \ \ \text{and} \ \ \ \ \|S\|=\|Sy\|=1.
\]
In the same paper \cite{AAGM2}, the authors characterize Banach spaces $Y$ for which the pair ($\ell_1, Y$) has the BPBp in terms of convex series and use the result to show that there is a reflexive Banach space $Y$ such that $(\ell_1, Y)$ does not have the BPBp (while, as commented above, $NA(\ell_1,Y)$ is dense in $L(\ell_1,Y)$ for every $Y$). Specifically,
$(\ell_1,Y)$ has the BPBp if and only if $Y$ has the \emph{approximate hyperplane
series property} (\emph{AHSP}): for every $\eps>0$ there exists $0<\eta(\eps)<\eps$ such that for every
sequence $(y_k) \subset S_Y$ and for every convex series $\sum_{k=1}^{\infty}\alpha_k$ with
$$
\left\|\sum_{k=1}^{\infty}\alpha_k y_k\right\| > 1-\eta(\eps),
$$
there exist $A\subset \mathbb{N}$, $y^*\in S_{Y^*}$ and $\{z_k\, :\, k\in A\}\subset S_Y$ satisfying
\begin{itemize}
\item[(1)] $\sum\limits_{k\in A} \alpha_k>1-\eps$,
\item[(2)] $\|z_k-y_k\|<\eps$ and $y^*(z_k)=1$ for all $k\in A$.
\end{itemize}
Let us remark that the definition of the AHSP does not change if we replace infinite sequences by finite (but of arbitrary length) sequences.

The following spaces are known to have the AHSP
\cite{AAGM2,ChoiKimSK}: finite dimensional spaces, uniformly convex
spaces, spaces with property $\beta$ (see definition in section \ref{sec:AHP}), and the so-called lush spaces (see definition in section \ref{sec:AHP}). In particular, $C_0(L)$ spaces, $L_1(\mu)$ spaces, the disc algebra $A(\D)$ and $H^{\infty}(\D)$, all have the AHSP.

In section~\ref{sec:AHP} of the present paper, we introduce a sufficient condition for the AHSP, called the AHP (see Definition~\ref{def:AHP}), which actually subsumes all previously known examples. We use the AHP to present new examples of spaces with the AHSP, namely, the spaces $L_1(\mu,X)$ when $\mu$ is an arbitrary measure and $X$ is finite-dimensional, uniformly convex, lush or has property $\beta$.

Finally, we start section~\ref{sec:Zizler} by showing that there is no BPB version of the theorem of Zizler \cite{Zizler} which states that the set of operators between two Banach spaces whose
adjoints attain their norm is dense in the space of all operators. Then, we exploit this idea getting several results: a new characterization of the AHSP for dual spaces in terms of $w^*$-continuous operators, a characterization of pairs $(X,X^*)$ having the AHSP (a concept recently introduced in \cite{ABGM}, see its definition in section~\ref{sec:Zizler}), and a proof that $(C_0(K,Y),C_0(K,Y)^*)$ has the AHSP for every locally compact
Hausdorff space $K$ and every uniformly smooth space $Y$, thereby extending a result in \cite{ABGM} to the vector-valued case.

\vspace*{2em}

\textbf{Acknowledgement:} This research was initiated during the visit of the first, third and fourth authors to the Department of Mathematical Sciences, Kent State University in 2012. They would like to thank the department and specially Richard Aron for their hospitality and fruitful discussions about this research topic.


\section{A sufficient condition for the AHSP}\label{sec:AHP}

We devote this section to study a sufficient condition for the AHSP, which actually covers all known examples, and which will be useful to provide new examples of the form $L_1(\mu,X)$. We need some notation. Let $X$ be a Banach space. A \emph{face} of $B_X$ is a non-empty subset of the form
$$
F(x^*) := \bigl\{x\in S_X\, :\, \re x^*(x)=1\bigr\}
$$
for a suitable $x^*\in S_{X^*}$ attaining its norm. A subset $C\subseteq S_{X^*}$ is said to be \emph{norming} if
$$
\|x\|=\sup\{|x^*(x)|\,:\,x^*\in C\} \quad (x\in X)
$$
and it is said to be \emph{rounded} if $\theta C=C$ for every $\theta\in \K$ with $|\theta|=1$.

\begin{definition}\label{def:AHP}
A Banach space $X$ is said to have the {\it approximate hyperplane property} (\emph{AHP} in short) if there exist a function $\delta:(0,1) \longrightarrow (0, 1)$ and a norming subset $C$ of $S_{X^*}$ for $X$ satisfying the following:\\
Given $\eps>0$, there is a function $\Upsilon_{X,\eps}: C\longrightarrow S_{X^*}$ such that if $x^*\in C$ and $x\in S_X$ satisfy $\re x^*(x)>1-\delta(\varepsilon)$, then $\dist\bigl(x,F(\Upsilon_{X,\eps}(x^*))\bigr)<\eps$.

Observe that by a routine argument, we may suppose the set $C$ to be rounded. It is also straightforward to prove that it is enough to check the property just for a dense subset of $S_X$.
\end{definition}

As announced in the introduction, we show that the AHP implies the AHSP. Actually, more can be said.

\begin{prop}\label{prop-AHPimpliesAHSP}
Let $X$ be a Banach space. Suppose that there is a function $\delta:(0,1)\longrightarrow (0,1)$ such that for every finite-dimensional subspace $Y$ of $X$, there exists a subspace $Z$ of $X$ containing $Y$ and having the AHP with the function $\delta$. Then, $X$ has the AHSP. In particular, the AHP implies the AHSP.
\end{prop}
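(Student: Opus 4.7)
The plan is to reduce the AHSP to a finite-dimensional problem via the hypothesis, and then invoke the AHP a single time to extract all the required data. Given $\varepsilon \in (0,1)$, I would first choose $\eta = \eta(\varepsilon) > 0$ sufficiently small (of order $\varepsilon\delta(\varepsilon)$) and start from a convex series $\sum_{k=1}^\infty \alpha_k y_k$ with $(y_k) \subset S_X$ and $\bigl\|\sum_{k} \alpha_k y_k\bigr\| > 1-\eta$. Pick $N$ so that $\sum_{k>N}\alpha_k < \eta$; then the finite partial sum $v := \sum_{k=1}^N \alpha_k y_k$ satisfies $\|v\| > 1 - 2\eta$.

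Next, apply the hypothesis to $Y := \mathrm{span}\{y_1, \dots, y_N\}$ to obtain a subspace $Z \subseteq X$ containing $Y$ that has the AHP with the given function $\delta$ and some norming, rounded set $C \subseteq S_{Z^*}$. Since $v \in Z$ and each $y_k$ with $k \leq N$ belongs to $S_Z$, I may use that $C$ is norming and rounded to pick $x^* \in C$ with $\re x^*(v) > \|v\| - \eta > 1 - 3\eta$. Expanding $\re x^*(v) = \sum_{k=1}^N \alpha_k \re x^*(y_k)$ and using $\re x^*(y_k) \leq 1$, a standard estimate on the non-negative quantities $\alpha_k\bigl(1-\re x^*(y_k)\bigr)$ yields
\[
\delta(\varepsilon) \sum_{k \leq N,\, k \notin B} \alpha_k \,\leq\, \sum_{k \leq N} \alpha_k\bigl(1 - \re x^*(y_k)\bigr) \,=\, \sum_{k\leq N}\alpha_k - \re x^*(v) \,<\, 3\eta,
\]
where $B := \bigl\{\,k \leq N \,:\, \re x^*(y_k) > 1 - \delta(\varepsilon)\,\bigr\}$. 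Combined with the tail bound $\sum_{k > N}\alpha_k < \eta$, this gives $\sum_{k \in B}\alpha_k > 1 - \eta - 3\eta/\delta(\varepsilon)$, which exceeds $1 - \varepsilon$ for $\eta$ chosen suitably small in terms of $\varepsilon$ and $\delta(\varepsilon)$.

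Finally, for each $k \in B$ the AHP of $Z$, applied to $y_k \in S_Z$ and $x^* \in C$, produces $z_k \in F(\Upsilon_{Z,\varepsilon}(x^*)) \subseteq S_Z \subseteq S_X$ with $\|z_k - y_k\| < \varepsilon$. The crucial feature is that a single functional $y^* := \Upsilon_{Z,\varepsilon}(x^*) \in S_{Z^*}$ norms every $z_k$ with $k \in B$ simultaneously; a Hahn--Banach extension of $y^*$ to an element of $S_{X^*}$ then supplies the functional demanded by the AHSP, and the ``in particular'' statement follows by taking $Z=X$ in every step. The main technical burden is bookkeeping---calibrating $\eta(\varepsilon)$ to absorb the truncation loss, the error in selecting $x^*$ from the norming set, and the Markov-type estimate producing $B$; conceptually, the only subtle ingredient is that the AHP deterministically assigns a common target functional $\Upsilon_{Z,\varepsilon}(x^*)$ to each $x^* \in C$, which is exactly what lets a single $y^*$ work for every index in $B$.
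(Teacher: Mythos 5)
Your proposal is correct and follows essentially the same route as the paper: select $x^*$ from the norming set of the AHP-subspace $Z$ containing the (finitely many) relevant vectors, isolate the good index set by a Markov-type estimate, apply the AHP to land all of them near the single face $F(\Upsilon_{Z,\varepsilon}(x^*))$, and finish with a Hahn--Banach extension. The only cosmetic difference is that the paper invokes the standing remark that finite convex combinations suffice for the AHSP and so skips the explicit truncation bookkeeping you carry out.
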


\begin{proof}
Let $0<\eps<1$ and write $\delta_1(\eps) = \min\{ \delta(\eps), \eps \}$.
Consider a finite convex combination $\sum_{j=1}^n \alpha_j x_j $ of elements of $S_X$ satisfying $\norm{\sum_{j=1}^n \alpha_j x_j } > 1-\delta_1^2$. By hypothesis, there is a subspace $Z$ of $X$ with the AHP containing the subspace spanned by $\{x_j\,:\,j=1,\ldots,n\}$; we denote by $C$ the norming subset of $S_{Z^*}$ given by the AHP of $Z$, which we may and do suppose that it is rounded. Then, there exists $x^*\in C$ such that
\[
\re x^*\Bigl(\sum_{j=1}^n \alpha_j x_j \Bigr)=\re \sum_{j=1}^n \alpha_j x^*(x_j) > 1-\delta_1(\eps)^2.
\]
Setting
$$
A= \bigl\{ j \,:\, 1\leq j \leq n,\ \re x^*(x_j) > 1-\delta_1(\eps)\bigr\},
$$
we can get easily that
$$
\sum_{j\in A} \alpha_j >1-\delta_1(\eps)\geq 1-\eps
$$
(see the proof of \cite[Proposition~2.1]{LeeMartin}). By the AHP of $Z$, $\dist\bigl(x_j,F(\Upsilon_{Z,\eps}(x^*))\bigr)<\eps$ for all $j\in A$. Letting $y^*$ be a Hahn-Banach extension of $\Upsilon_{Z,\eps}(x^*)$ to the whole of $X$, then we get $\dist\bigl(x_j,F(y^*)\bigr)<\eps$ for $j=1,\ldots,n$, as desired.
\end{proof}

Examples of spaces with AHP appeared already (without this name) in the literature. Actually, the usual way to provide examples of Banach spaces with the AHSP has been to prove that they have the AHP. Let us discuss the main examples.

We start with the easiest case: uniformly convex spaces.

\begin{prop}
Every uniformly convex Banach space has the AHP. Besides, here $C$ is the whole dual unit sphere and $\Upsilon$ is the identity.
\end{prop}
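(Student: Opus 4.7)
The plan is to take the norming set $C=S_{X^*}$ in full, the function $\Upsilon_{X,\eps}$ to be the identity on $S_{X^*}$ for every $\eps>0$, and $\delta(\eps):=\delta_X(\eps)$, where $\delta_X$ denotes the modulus of uniform convexity of $X$ (so $\delta_X(\eps)>0$ for every $\eps\in(0,1)$).

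Before estimating anything, I would observe that the choice of $\Upsilon$ forces $F(\Upsilon_{X,\eps}(x^*))=F(x^*)$ to be nonempty for every $x^*\in S_{X^*}$. This is legitimate because uniform convexity implies reflexivity (Milman--Pettis), so by James' theorem every $x^*\in S_{X^*}$ attains its norm; moreover, strict convexity (which follows from uniform convexity) forces the attaining point to be unique. Thus, for each $x^*\in S_{X^*}$ there is a single $y(x^*)\in S_X$ with $F(x^*)=\{y(x^*)\}$.

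The computational core is the standard midpoint argument. Given $x^*\in S_{X^*}$ and $x\in S_X$ with $\re x^*(x)>1-\delta_X(\eps)$, setting $y:=y(x^*)$ I would estimate
$$
\left\|\frac{x+y}{2}\right\|\geq \re x^*\!\left(\frac{x+y}{2}\right)=\frac{\re x^*(x)+1}{2}>1-\frac{\delta_X(\eps)}{2}>1-\delta_X(\eps).
$$
The contrapositive of the definition of the modulus of uniform convexity then gives $\|x-y\|<\eps$, whence $\dist\bigl(x,F(x^*)\bigr)<\eps$, which is exactly the AHP condition.

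There is essentially no obstacle here: uniform convexity was tailor-made to control precisely this kind of ``almost maximizing'' behaviour. The only point that requires any care at all is ensuring that the faces $F(x^*)$ are nonempty, which is why invoking reflexivity (and hence the fact that $C=S_{X^*}$ is a valid choice landing in the norm-attaining functionals) is flagged as the first step.
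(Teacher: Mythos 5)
Your argument is correct and is essentially the same one the paper relies on (the paper only cites \cite[Lemma~13]{ChoiKimSK} and \cite[Lemma~2.1]{ABGM} instead of writing it out): take $\delta=\delta_X$, use reflexivity to guarantee that every face $F(x^*)$ is nonempty so that $C=S_{X^*}$ and $\Upsilon=\mathrm{Id}$ are admissible, and run the midpoint estimate against the modulus of uniform convexity. No gaps; the only cosmetic remark is that norm-attainment of every functional on a reflexive space is the elementary direction (weak compactness of $B_X$), so invoking James' theorem is more than is needed.
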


This result appeared in \cite[Lemma~13]{ChoiKimSK} (without this notation) and also follows easily from \cite[Lemma~2.1]{ABGM}.

Next, it is shown in \cite[Lemma~3.4]{AAGM2} that every finite-dimensional Banach space has the AHP.

\begin{prop}\label{prop-fin-dim-AHP}
Every finite-dimensional Banach space has the AHP. Besides, here $C$ is the whole dual unit sphere but $\Upsilon$ is not, in general, equal to the identity.
\end{prop}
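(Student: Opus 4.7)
The plan is to take $C = S_{X^*}$ (every element of $S_{X^*}$ attains its norm in finite dimensions, and $S_{X^*}$ is trivially rounded and norming) and construct $\Upsilon_{X,\eps}$ by a finite covering argument based on compactness of $S_X$ and $S_{X^*}$.

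Fix $\eps > 0$. The key local observation I would establish first is: for every $x^* \in S_{X^*}$ there exist a relative open neighborhood $U_{x^*}$ of $x^*$ in $S_{X^*}$ and some $\delta_{x^*} > 0$ such that for all $z^* \in U_{x^*}$ and all $x \in S_X$ with $\re z^*(x) > 1 - \delta_{x^*}$ one has $\dist\bigl(x, F(x^*)\bigr) < \eps$. This follows by contradiction and sequential compactness: otherwise there would exist $z_n^* \to x^*$ in $S_{X^*}$, $\delta_n \to 0$, and $x_n \in S_X$ with $\re z_n^*(x_n) > 1 - \delta_n$ but $\dist\bigl(x_n, F(x^*)\bigr) \geq \eps$. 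Extracting a subsequence with $x_n \to x \in S_X$, the bound $|\re z_n^*(x_n) - \re x^*(x)| \leq \|x_n - x\| + \|z_n^* - x^*\| \to 0$ forces $\re x^*(x) = 1$, hence $x \in F(x^*)$, contradicting the distance lower bound.

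Next I would use compactness of $S_{X^*}$ to extract a finite subcover $U_{x^*_1}, \ldots, U_{x^*_k}$ of the family $\{U_{x^*}\}_{x^* \in S_{X^*}}$, set $\delta(\eps) := \min_i \delta_{x^*_i}$, and, for each $x^* \in S_{X^*}$, pick any index $i(x^*)$ with $x^* \in U_{x^*_{i(x^*)}}$ and define $\Upsilon_{X,\eps}(x^*) := x^*_{i(x^*)}$. Then for every pair $(x^*, x)$ with $\re x^*(x) > 1 - \delta(\eps)$, the local statement applied at $x^*_{i(x^*)}$ yields $\dist\bigl(x, F(\Upsilon_{X,\eps}(x^*))\bigr) < \eps$, which is precisely the AHP.

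The main obstacle---and the reason $\Upsilon$ genuinely has to differ from the identity---is the failure of upper semicontinuity of the face map $x^* \mapsto F(x^*)$. A concrete illustration in $\ell_1^2$ (with dual $\ell_\infty^2$) is $x_n^* = (1, 1 - 1/n) \in S_{\ell_\infty^2}$: its face inside $S_{\ell_1^2}$ is the singleton $\{(1,0)\}$, yet $x = (0,1)$ satisfies $\re x_n^*(x) = 1 - 1/n \to 1$ while $\dist\bigl(x, F(x_n^*)\bigr) = 2$. The construction above sidesteps this by replacing $x_n^*$ with the ``larger'' nearby functional $(1,1)$, whose face is the whole segment from $(1,0)$ to $(0,1)$ and already captures the troublesome points. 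Once this subtlety is absorbed into the choice of $\Upsilon$, everything else is routine finite-dimensional compactness.
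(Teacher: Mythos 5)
Your argument is correct. The paper itself gives no proof of this proposition, deferring entirely to \cite[Lemma~3.4]{AAGM2}, and the proof there is in essence the same compactness argument you give: the local claim established by contradiction and sequential compactness of $S_X$ is sound (note only that in finite dimensions every $x^*\in S_{X^*}$ attains its norm, so each $F(x^*)$ is genuinely a face, and one should cap $\delta(\eps)$ below $1$ to match the definition), and the finite subcover of $S_{X^*}$ correctly produces a $\Upsilon_{X,\eps}$ depending only on $x^*$, as the definition of the AHP requires. Your $\ell_1^2$ example also properly substantiates the second clause of the statement, since for any fixed $\delta>0$ the functionals $x_n^*=(1,1-1/n)$ with $1/n<\delta$ defeat the choice $\Upsilon=\Id$.
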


It follows from the result above, Proposition~\ref{prop-AHPimpliesAHSP} and the results in \cite[\S 4]{ACKLM-1}, that property AHP is not stable by infinite $c_0$-, $\ell_1$- or $\ell_\infty$-sums.

\begin{example}
{\slshape AHP is not stable by infinite $c_0$-, $\ell_1$- or $\ell_\infty$-sums.\ } Indeed, a sequence $\{Y_k\}_{k\in\N}$ of finite-dimensional spaces is presented in \cite[\S 4]{ACKLM-1} such that its $c_0$-, $\ell_1$- and $\ell_\infty$-sums fail the AHSP. Now, all the $Y_k$'s have the AHP (Proposition~\ref{prop-fin-dim-AHP}) and the sums fail it by Proposition~\ref{prop-AHPimpliesAHSP}.
\end{example}

The next family of examples we present here is the one of lush spaces. A Banach space $X$ is said to be \emph{lush} \cite{BKMW} if for every $x_0,y_0\in S_X$ and every $\eps>0$, there is a slice $S:=\{x\in B_X\,:\, \re x^*(x)>1 -\eps\}$ with $x^* \in S_{X^*}$ such that $x_0 \in S$ and the distance from $y_0$ to the absolutely convex hull of $S$ is smaller than $\eps$. We refer to \cite{BKMM,BKMW,KMMP2009,LeeMartin} for information about lush spaces. Among lush spaces we may find $C(K)$ spaces, $L_1(\mu)$ spaces and their isometric preduals, the disk algebra, $H^\infty(\D)$, and finite-codimensional subspaces of $C[0,1]$.

\begin{prop}\label{prop-lush-separable}
Every separable lush space has the AHP. Besides, the function $\delta$ does not depend on the particular lush space, $C$ is not the whole dual unit sphere and $\Upsilon$ is the identity.
\end{prop}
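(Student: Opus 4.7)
The plan is to verify the AHP for a separable lush space $X$ by producing a rounded norming subset $C\subset S_{X^*}$ of norm-attaining functionals for which $B_X=\caconv(F(x^*))$ holds for every $x^*\in C$, and then deducing the AHP with $\Upsilon_{X,\eps}$ equal to the identity via a short decomposition argument that yields a universal $\delta$ depending only on $\eps$ (one can take $\delta(\eps)=\eps^{2}/16$ uniformly in real and complex scalars).

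To construct $C$, I would fix a countable dense sequence $\{y_n\}\subset S_X$. For each $n\in\N$, applying the lushness condition to the triples $(y_n,y_m,1/k)$ over all $m,k\in\N$ produces functionals $z^*_{n,m,k}\in S_{X^*}$ with $\re z^*_{n,m,k}(y_n)>1-1/k$ and $\dist(y_m,\caconv(S(z^*_{n,m,k},1/k)))<1/k$, where $S(\cdot,\eta)$ denotes the corresponding slice. Since $X$ is separable, $B_{X^*}$ is $w^*$-metrizable, and a diagonal extraction yields a $w^*$-cluster point $x^*_n$ that attains its norm at $y_n$; passing to the limit together with density of $\{y_m\}$ gives $B_X=\caconv(F(x^*_n))$. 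Setting $C:=\{\theta x^*_n:n\in\N,\ \theta\in\K,\ |\theta|=1\}$ produces a rounded norming set, and the identity $F(\theta x^*)=\bar\theta F(x^*)$ (together with the rotation-invariance of closed absolutely convex hulls) ensures that $B_X=\caconv(F(x^*))$ for every $x^*\in C$.

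For the decomposition, fix $x^*\in C$ and $x_0\in S_X$ with $\re x^*(x_0)>1-\delta(\eps)$. Using $B_X=\caconv(F(x^*))$, approximate $x_0$ by $\sum_k\lambda_k y_k$ with $\sum_k|\lambda_k|\le 1$, $y_k\in F(x^*)$, and error $\eps'$ arbitrarily small. Writing $\lambda_k=a_k+ib_k$ and using $x^*(y_k)=1$, the hypothesis yields $\sum_k a_k>1-\delta(\eps)-\eps'$. Then $\sum_{a_k\le 0}|\lambda_k|\le\delta(\eps)+\eps'$ (because $|\lambda_k|-a_k\ge|\lambda_k|$ when $a_k\le 0$), and the identity $b_k^2=(|\lambda_k|-a_k)(|\lambda_k|+a_k)$ combined with Cauchy--Schwarz gives $\sum_{a_k>0}|b_k|\le\sqrt{2(\delta(\eps)+\eps')}$. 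Setting $s:=\sum_{a_k>0}a_k\in[1-\delta(\eps)-\eps',1]$ and $v:=s^{-1}\sum_{a_k>0}a_k y_k$, the convexity of $F(x^*)$ places $v\in F(x^*)$, and the triangle inequality yields $\|x_0-v\|<2\delta(\eps)+3\eps'+\sqrt{2(\delta(\eps)+\eps')}$. Letting $\eps'\to 0$ and choosing $\delta(\eps)=\eps^2/16$ makes this strictly less than $\eps$, so $\dist(x_0,F(x^*))<\eps$, which is the AHP with $\Upsilon_{X,\eps}$ the identity.

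The main obstacle will be the construction in the first step, namely extracting a single functional $x^*_n\in S_{X^*}$ from the slice approximations $z^*_{n,m,k}$ that simultaneously attains its norm at $y_n$ and upgrades slice-based density into the face-based density $B_X=\caconv(F(x^*_n))$; the diagonal/$w^*$-limit argument is delicate but standard in the theory of separable lush spaces. The decomposition in the second step is routine, although in the complex case the Cauchy--Schwarz bound on $\sum|b_k|$ is what dictates the quadratic choice $\delta(\eps)=O(\eps^2)$ rather than the linear $\delta(\eps)=O(\eps)$ that suffices in the real case.
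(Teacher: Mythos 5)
Your overall route is the same as the paper's: produce a rounded norming set $C\subset S_{X^*}$ with $B_X=\caconv\bigl(F(x^*)\bigr)$ for every $x^*\in C$, and then run a decomposition argument to get the AHP with $\Upsilon$ the identity and a universal quadratic $\delta$. Your second step is correct and complete --- the estimates $\sum_{a_k\le 0}|\lambda_k|\le\delta+\eps'$, the Cauchy--Schwarz bound $\sum_{a_k>0}|b_k|\le\sqrt{2(\delta+\eps')}$, the renormalization $v=s^{-1}\sum_{a_k>0}a_ky_k\in F(x^*)$, and the final bound $2\delta+\sqrt{2\delta}<\eps$ for $\delta=\eps^2/16$ all check out; this is exactly the argument of \cite[Proposition~2.1.c]{LeeMartin} that the paper invokes.

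The genuine gap is in your first step, which you yourself flag as the main obstacle. A $w^*$-cluster point $x^*_n$ of the functionals $z^*_{n,m,k}$ does give $\re x^*_n(y_n)=1$ (so $x^*_n$ is norm-attaining), but it does \emph{not} give $B_X=\caconv\bigl(F(x^*_n)\bigr)$: knowing that $y_m$ is within $1/k$ of $\mathrm{aconv}\bigl(S(z^*_{n,m,k},1/k)\bigr)$ says nothing about its distance to $\caconv\bigl(F(x^*_n)\bigr)$, because $w^*$-proximity of $z^*_{n,m,k}$ to $x^*_n$ gives no control whatsoever on how the slice $S(z^*_{n,m,k},1/k)$ sits relative to the face of the limit functional --- slices of $w^*$-close functionals can be essentially disjoint sets. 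Upgrading the slice-based lushness condition to the face-based identity $B_X=\caconv\bigl(F(x^*)\bigr)$ for a single norming family is precisely the nontrivial content of \cite[Theorem~4.3]{KMMP2009} together with \cite[Corollary~3.5]{AcoBecRod}, which the paper cites rather than reproves; the actual argument there is a careful recursive construction (arranging each new slice to interact with all previously treated points of the dense sequence), not a bare diagonal extraction. As written, your proof of the existence of $C$ would fail at this transfer step; either supply that construction in full or cite those two results as the paper does.
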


A proof of this result is contained in the proof of \cite[Proposition~2.1.c]{LeeMartin}. We are going to comment this proof here for the sake of completeness and also in order to extend the result to some non-separable spaces. Indeed, let $X$ be a separable lush space. By \cite[Theorem~4.3]{KMMP2009} and \cite[Corollary~3.5]{AcoBecRod}, there exists a rounded subset $C$ of $S_{X^*}$ norming for $X$ such that
\begin{equation}\label{eq:lush-CL}
B_X=\caconv\bigl(F(x^*)\bigr)
\end{equation}
for every $x^*\in C$, where $\caconv(\cdot)$ denotes the absolutely closed convex hull. With this in mind, one may follows the proof of \cite[Proposition~2.1.c]{LeeMartin} to get Proposition~\ref{prop-lush-separable} with a function $\delta$ (which is independent of $X$), the norming set $C$ and $\Upsilon$ equals to the identity.

Therefore, the key ingredient of the proof is to get the existence of a norming set $C$ such that \eqref{eq:lush-CL} holds for every element of $C$. Another family of spaces for which this happens is the one of almost-CL-spaces. A Banach space $X$ is said to be an \emph{almost-CL-space} if $B_X$ is the closed absolutely convex hull of every maximal convex subset of $S_X$. We refer the reader to \cite{MartPaya-CL} and references therein for more information and background. Almost-CL-spaces are lush, but the converse is not true \cite{BKMW}. The main examples of almost-CL-spaces are $C(K)$-spaces and $L_1(\mu)$-spaces. It is easy to show (see \cite[\S 2]{MartPaya-CL}) that if $X$ is an almost-CL-space, then there is a rounded subset $C$ of $S_{X^*}$ norming for $X$ such that \eqref{eq:lush-CL} holds for every element of $C$. By the comments above, almost-CL-spaces have the AHP.

\begin{prop}
Every almost-CL-space has the AHP. Besides, the function $\delta$ does not depend on the particular almost-CL-space, $C$ is not the whole dual unit sphere and $\Upsilon$ is the identity.
\end{prop}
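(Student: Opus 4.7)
The plan is to imitate the argument behind Proposition~\ref{prop-lush-separable}, observing that the only structural fact used there is the existence of a rounded norming subset $C\subseteq S_{X^*}$ satisfying \eqref{eq:lush-CL} for every $x^*\in C$. The excerpt already reminds us (via \cite[\S 2]{MartPaya-CL}) that almost-CL-spaces admit such a set: every maximal convex subset of $S_X$ is a face $F(x^*)$ for some $x^*\in S_{X^*}$, the collection of these $x^*$ is norming, and it is easily rendered rounded by closing under multiplication by scalars of modulus $1$. So the first step is to fix this $C$.

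Next, given $\eps\in(0,1)$, I would declare $\Upsilon_{X,\eps}$ to be the identity on $C$ and define $\delta(\eps)$ by a universal formula (for instance, $\delta(\eps)=\eps^{2}/K$ for a fixed absolute constant $K$, to be calibrated at the end of the argument). Fix $x^*\in C$ and $x\in S_X$ with $\re x^*(x)>1-\delta(\eps)$. Using $x\in B_X=\caconv(F(x^*))$, I would approximate $x$ within some small $\eta>0$ by a finite absolutely convex combination
$$
z=\sum_{k=1}^{n}\beta_{k}f_{k}, \qquad f_{k}\in F(x^*),\quad \sum_{k=1}^{n}|\beta_{k}|\leq 1.
$$
Taking $\eta$ sufficiently small (depending only on $\eps$), one gets
$$
\re\sum_{k=1}^{n}\beta_{k}=\re x^*(z)>1-\delta(\eps)-\eta.
$$
Combined with $\sum_{k}|\beta_{k}|\leq 1$, this forces most of the $\ell_{1}$-mass of $(\beta_{k})$ to sit on indices whose phase is close to $1$. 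Collecting those indices in a set $A$, the renormalized convex combination
$$
y=\Bigl(\sum_{k\in A}\re\beta_{k}\Bigr)^{-1}\sum_{k\in A}\re\beta_{k}\,f_{k}
$$
belongs to $F(x^*)$ (convex combinations of face elements remain in the face, since evaluating $x^*$ there still gives $1$ while the norm is at most $1$), and a direct estimate of $\|x-y\|$ from the mass-concentration information yields $\|x-y\|<\eps$, i.e.\ $\dist(x,F(x^*))<\eps$.

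The main technical point, and thus the main obstacle, is the quantitative mass-concentration step that converts the bound $\re\sum\beta_{k}>1-\delta(\eps)$ with $\sum|\beta_{k}|\leq 1$ into the estimate $\|x-y\|<\eps$, in such a way that $\delta(\eps)$ depends only on $\eps$ and not on $X$. This is exactly the scalar computation carried out in \cite[Proposition~2.1.c]{LeeMartin}; since once $C$ is fixed everything reduces to that purely scalar argument, the same universal $\delta$ serves every almost-CL-space. Finally, the fact that $C$ is not the whole dual sphere in general is visible already for $X=L_{1}(\mu)$ (where $C$ can be taken to be the set of modulus-$1$ measurable functions, a proper subset of $S_{L_{\infty}(\mu)}$), and $\Upsilon_{X,\eps}=\Id$ by construction.
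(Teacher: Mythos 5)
Your proposal is correct and follows essentially the same route as the paper: the paper also reduces the statement to the existence of a rounded norming set $C\subseteq S_{X^*}$ with $B_X=\caconv\bigl(F(x^*)\bigr)$ for every $x^*\in C$ (citing \cite[\S 2]{MartPaya-CL} for almost-CL-spaces) and then invokes the scalar argument of \cite[Proposition~2.1.c]{LeeMartin}, which yields a universal $\delta$ and $\Upsilon=\Id$. Your added sketch of the mass-concentration step is a faithful unpacking of that same argument, not a different proof.
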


We may particularize this result to the main examples of almost-CL-spaces.

\begin{corollary}
All $C(K)$-spaces and all $L_1(\mu)$-spaces have the AHP. Besides, the function $\delta$ does not depend on the particular space, $C$ is not the whole dual unit sphere and $\Upsilon$ is the identity.
\end{corollary}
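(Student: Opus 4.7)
The corollary is essentially a direct specialization of the preceding Proposition, so the plan is simply to verify that both $C(K)$-spaces and $L_1(\mu)$-spaces fall within the class of almost-CL-spaces and then quote that Proposition. Since the text just above has already announced ``The main examples of almost-CL-spaces are $C(K)$-spaces and $L_1(\mu)$-spaces,'' the only real work is to recall why this is so and to organize the citation.

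For a $C(K)$-space, I would identify the maximal convex subsets of $S_{C(K)}$: they are the sets $M_{t,\lambda}=\{f\in S_{C(K)}\, :\, f(t)=\lambda\}$ with $t\in K$ and $|\lambda|=1$. A standard partition-of-unity (or simple step-function) argument shows that any $g\in B_{C(K)}$ can be approximated in norm by absolutely convex combinations of functions in any single $M_{t,\lambda}$, so $B_{C(K)}=\caconv(M_{t,\lambda})$, confirming the almost-CL property. For an $L_1(\mu)$-space, I would appeal to the analogous fact: the maximal convex subsets of $S_{L_1(\mu)}$ correspond (via Hahn-Banach) to faces determined by measurable unimodular functions in $L_\infty(\mu)$, and a classical argument (splitting into sets of constant sign, using atomicity/non-atomicity cases separately) shows that $B_{L_1(\mu)}$ is the closed absolutely convex hull of any such maximal face. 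Both verifications are standard and documented in \cite{MartPaya-CL}, so in practice I would cite this rather than reproduce the computations.

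Once this is in place, the conclusion of the corollary — the AHP with a universal $\delta$, a norming set $C$ strictly smaller than $S_{X^*}$, and $\Upsilon=\Id$ — is immediate from the preceding Proposition applied to each of these spaces. The norming set $C$ is obtained explicitly from the almost-CL structure (for $C(K)$ it is the set of scalar multiples of evaluation functionals $\{\lambda\delta_t\,:\,t\in K,\ |\lambda|=1\}$; for $L_1(\mu)$ it is the set of unimodular $L_\infty(\mu)$-functions of constant sign on the support, or a suitable rounded variant), and in both cases this $C$ is a proper rounded subset of $S_{X^*}$.

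I do not anticipate a real obstacle: every nontrivial step is already contained in the preceding proposition or in \cite{MartPaya-CL}. The only point where one has to be slightly careful is to record explicitly that the norming subset $C$ produced is \emph{not} the whole dual sphere, and that the same $\delta$ works across all such spaces — but both of these are transparent from the previous proposition, where $\delta$ was constructed independently of the particular almost-CL-space.
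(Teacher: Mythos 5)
Your proposal matches the paper's own treatment: the corollary is given there with no separate proof, as a direct particularization of the preceding proposition on almost-CL-spaces, with the fact that $C(K)$- and $L_1(\mu)$-spaces are almost-CL-spaces delegated to \cite{MartPaya-CL} exactly as you do. Your extra sketch of the maximal convex subsets of the spheres is consistent with that reference and does not alter the argument.
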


We do not know if this result extends to general non-separable lush spaces, but a reduction to the separable case of lushness property allows to get this weaker version.

\begin{prop}\label{prop:nonseparable-lush}
There exists a function $\widetilde{\delta}:(0,1)\longrightarrow (0,1)$ such that for every lush space $X$ and every separable subspace $Y$ of $X$, there is a (separable) subspace $Z$ of $X$ containing $Y$ and having the AHP with the function $\widetilde{\delta}$.
\end{prop}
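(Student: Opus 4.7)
The plan is to exhibit $Z$ as a \emph{separable lush} subspace of $X$ that contains $Y$, and then apply Proposition~\ref{prop-lush-separable} directly to $Z$. Since the function $\delta$ obtained in that proposition is independent of the particular separable lush space, setting $\widetilde{\delta} := \delta$ works uniformly in $X$ and $Y$.

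To construct $Z$, I would run a standard separable-saturation argument. Put $Y_0 := Y$; assuming a separable subspace $Y_n \subseteq X$ has already been built, fix a countable dense subset $D_n \subseteq S_{Y_n}$. For each triple $(x_0, y_0, k) \in D_n \times D_n \times \N$, use the lushness of $X$ with parameter $1/k$ to pick $x^*_{x_0,y_0,k} \in S_{X^*}$ together with a finite absolutely convex combination $\sum_j \lambda_j^{x_0,y_0,k}\, s_j^{x_0,y_0,k}$ of elements of the slice $\{x \in B_X : \re x^*_{x_0,y_0,k}(x) > 1-1/k\}$ that lies within $1/k$ of $y_0$. Let $Y_{n+1}$ be the closed linear span of $Y_n$ together with every such vector $s_j^{x_0,y_0,k}$; this is still separable, as the closed span of a countable set. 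Finally, set $Z := \overline{\bigcup_n Y_n}$, which is separable and contains $Y$.

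To verify lushness of $Z$, fix $x_0, y_0 \in S_Z$ and $\eps>0$. Use density of $\bigcup_n D_n$ in $S_Z$ to find $n$ and $\tilde x_0, \tilde y_0 \in D_n$ with $\|x_0 - \tilde x_0\|, \|y_0 - \tilde y_0\| < \eps/3$, and choose $k > 3/\eps$. The construction at stage $n+1$ then yields $x^* \in S_{X^*}$ with $\re x^*(\tilde x_0) > 1-1/k$ and vectors $s_j \in Z$ in the associated slice satisfying $\|\tilde y_0 - \sum_j \lambda_j s_j\| < 1/k$. Let $z^* := x^*|_Z / \|x^*|_Z\|$; the bound $\|x^*|_Z\| \geq \re x^*(s_j) > 1 - 1/k$ shows $z^* \in S_{Z^*}$ is well defined, and because $\|x^*|_Z\| \leq 1$ the values $\re z^*(s_j)$ and $\re z^*(\tilde x_0)$ are still greater than $1-1/k$. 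Hence the slice $\{z \in B_Z : \re z^*(z) > 1 - \eps\}$ contains every $s_j$ and, after accounting for $\|x_0 - \tilde x_0\| < \eps/3$, contains $x_0$ as well; meanwhile $\sum_j \lambda_j s_j$ lies in its absolutely convex hull and is within $\eps/3 + 1/k < \eps$ of $y_0$. Once $Z$ is separable and lush, Proposition~\ref{prop-lush-separable} applied to $Z$ produces the AHP with the desired universal $\widetilde\delta$.

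The only real subtlety is the restriction/renormalisation step $x^* \mapsto z^*$, which could in principle widen the slice; but since $\|x^*|_Z\|$ sits in $(1-1/k,\,1]$ and $1/k$ can be made as small as needed by choosing $k$ large, all slice conditions survive without loss. Beyond this the argument is entirely standard separable-saturation bookkeeping.
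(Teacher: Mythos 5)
Your proposal is correct and follows the same route as the paper: reduce to a separable lush subspace $Z\supseteq Y$ and then invoke Proposition~\ref{prop-lush-separable}, whose function $\delta$ is independent of the particular separable lush space. The only difference is that where the paper simply cites \cite[Theorem~4.2]{BKMM} for the existence of such a $Z$, you reprove that reduction via a (correct) separable-saturation argument, including the restriction/renormalisation of the slice functionals to $Z$.
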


\begin{proof}
Let $\widetilde{\delta}$ the universal function provided in Proposition~\ref{prop-lush-separable}. Let $X$ be a lush space and $Y$ a separable subspace of $X$. We use \cite[Theorem~4.2]{BKMM} to get a separable subspace $Z$ of $X$ containing $Y$ which is lush. Now, Proposition~\ref{prop-lush-separable} gives that $Z$ has the AHP with the function $\widetilde{\delta}$, as required.
\end{proof}

The last family of examples of spaces with the AHSP is given by property $\beta$. A Banach space $X$ has \emph{property $\beta$} if there are two sets $\{ x_i\,:\, i \in I\} \subset S_X$, $\{ x^{*}_i\,:\, i \in I \} \subset S_{X^*}$ and a constant $0 \leq \rho <1$ such that the following conditions hold:
\begin{enumerate}
\item[$(i)$] $x^{*}_i(x_i)=1$, $\forall i \in I $.
\item[$(ii)$] $|x^*_i(x_j)|\leq \rho <1$ if $i, j \in I, i \ne j$.
\item[$(iii)$] $\|x\|=\sup\limits_{i \in I}  \bigl|x^{*}_i(x)\bigr|$ for every $x\in X$.
\end{enumerate}
This property was introduced by J.~Lindenstrauss \cite{Lindens} in his study of norm-attaining operators. We refer to \cite{Moreno} and references therein for more information and background.
It is known that if $X$ has property $\beta$, then $(Z,X)$ has the BPBp for every Banach space $Z$ \cite[Theorem~2.2]{AAGM2}. In particular, $(\ell_1,X)$ has the BPBp and so $X$ has the AHSP. Actually, property $\beta$ implies AHP.

\begin{proposition}\label{prop-betaimpliesAHP}
Property $\beta$ implies property AHP. Besides, the function $\delta$ only depends on the constant $\rho\in [0,1)$ of the definition of property $\beta$, $C$ is not the whole dual unit sphere, and the function $\Upsilon$ is the identity.
\end{proposition}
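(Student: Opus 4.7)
The plan is to use the biorthogonal data from property $\beta$ to produce, by a single algebraic formula, an element of $F(x^*)$ close to $x$. I would take $C := \{\theta x_i^* : i \in I,\ \theta \in \K,\ |\theta|=1\}$, which is rounded by construction and norming because property $\beta$ gives $\|x\| = \sup_i |x_i^*(x)|$. Let $\Upsilon : C \to S_{X^*}$ be the inclusion, and define $\delta(\eps) := \eps^2(1-\rho)^2/(3(1+\rho)^2)$, which depends only on $\eps$ and $\rho$. A rotation by a unimodular scalar reduces the verification to the case $x^* = x_i^*$: writing $x^* = \theta x_i^*$, one replaces $x$ by $\tilde x := \theta x$, searches for $z' \in F(x_i^*)$ close to $\tilde x$, and then $z := \bar\theta z'$ lies in $F(x^*)$ with $\|z-x\| = \|z'-\tilde x\|$.

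So suppose $x^* = x_i^*$ and set $\lambda := x_i^*(x)$, with $\re\lambda > 1-\delta$. From $|\lambda|\leq 1$ one gets $(\im\lambda)^2 \leq 1-(\re\lambda)^2 \leq 2(1-\re\lambda)$, hence $|1-\lambda| \leq \sqrt{3\delta}$. By continuity and strict monotonicity, there is a unique $t\geq 1$ solving $t-1 = \rho|t-\lambda|$, and the triangle inequality $|t-\lambda|\leq (t-1)+|1-\lambda|$ yields $t-1\leq \rho|1-\lambda|/(1-\rho)$. Put $z := x + (t-\lambda)x_i$. Then $x_i^*(z) = t$ and, using $|x_j^*(x_i)|\leq \rho$ for $j\neq i$,
\[
|x_j^*(z)| \leq |x_j^*(x)| + \rho|t-\lambda| \leq 1 + (t-1) = t.
\]
Since $\|z\| = \sup_j |x_j^*(z)|$, this forces $\|z\|=t$, so $z/t \in F(x_i^*)$. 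A triangle-inequality computation then gives
\[
\|z/t - x\| \leq \frac{(t-1)+|t-\lambda|}{t} \leq \frac{(t-1)(1+\rho)}{\rho} \leq \frac{|1-\lambda|(1+\rho)}{1-\rho} \leq \frac{\sqrt{3\delta}\,(1+\rho)}{1-\rho} = \eps,
\]
and undoing the rotation by $\theta$ finishes the proof.

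The key obstacle, in my view, is identifying the right single-shot construction. The naive candidate $y := x + (1-\lambda)x_i$ satisfies $x_i^*(y)=1$ but its norm can exceed $1$ by up to $\rho|1-\lambda|$, and simply normalising $y$ breaks the $x_i^*$-value. The trick is to overshoot by exactly the right amount, taking $t$ to be the fixed point of $s\mapsto 1+\rho|s-\lambda|$, so that both $\|z\|$ and $x_i^*(z)$ equal $t$ simultaneously and a single rescaling by $1/t$ lands us in $F(x_i^*)$ exactly.
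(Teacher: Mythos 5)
Your proof is correct and follows essentially the same route as the paper's: both produce an element of $F(x_i^*)$ by an explicit one-step correction of $x$ in the direction of $x_i$, using condition (ii) of property $\beta$ to control $|x_j^*(\cdot)|$ for $j\neq i$ --- the paper damps $x$ by a precomputed factor $a=\frac{1-\rho}{1-(1-\delta)\rho+\rho\sqrt{2\delta}}$ so that the $x_i^*$-value is exactly $1$ and the norm is at most $1$, while you overshoot to a common value $t$ of norm and functional and then rescale. One cosmetic point: the intermediate expression $\frac{(t-1)(1+\rho)}{\rho}$ is undefined when $\rho=0$ (a value the definition of property $\beta$ permits), but in that case $t=1$ and the two ends of your inequality chain, $(t-1)+|t-\lambda|\leq \frac{|1-\lambda|(1+\rho)}{1-\rho}$, remain valid, so nothing is lost; note also that the final bound is in fact strict because $\re\lambda>1-\delta$ is strict, which is what the definition of the AHP requires.
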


\begin{proof}
Suppose $X$ has property $\beta$ with constant $\rho\in[0,1)$ and consider the sets $\{ x_i\,:\, i \in I\} \subset
S_X$, $\{ x^{*}_i\,:\, i \in I \} \subset S_{X^*}$ given in the definition of the property. We write $C=\{ x^{*}_i\,:\, i \in I \}$, which is a norming set for $X$, and for $\eps\in (0,1)$, we consider $\delta\in (0,1)$ such that
\begin{equation*}
\frac{\delta(1+\rho)+2\rho\sqrt{2\delta}}{1-(1-\delta)\rho+\rho\sqrt{2\delta}}<\eps.
\end{equation*}
Now, we fix $i\in I$ and consider any $x_0\in S_X$ such that $\re x_i^*(x_0)>1-\delta$. We write
$$
a=\frac{1-\rho}{1-(1-\delta)\rho+\rho\sqrt{2\delta}}\in(0,1]\quad \text{ and } \quad b=x_i^*(x_0).
$$
Then $\re b>1-\delta$ and so $|{\rm Im}\  b| < \sqrt{2\delta}$. Consider the vector
$$
x= a\,x_0 + \bigl(1-a b\bigr)\,x_i\,\in X.
$$
Observe that, clearly, $x^*_i(x)=1$ and that
\begin{align*}
\|x_0-x\| &\leq \left(1-a\right)\|x_0\| + \left|1-a b\right|\|x_i\| \\ & = (1-a) + (1-a \re b)  + a| {\rm Im}\ b|< 1-a + \bigl(1 -a(1-\delta)\bigr)+ a\sqrt{2\delta}\\
&=\frac{\delta(1+\rho)+2\rho\sqrt{2\delta}}{1-(1-\delta)\rho+\rho\sqrt{2\delta}} <\eps.
\end{align*}
It remains to show that $\|x\|=1$ for which it suffices to check that $|x_j^*(x)|\leq 1$ for every $j\neq i$. Indeed, fix $j\in I$, $j\neq i$ and observe that
\begin{align*}
|x_j^*(x)|&\leq a|x_j^*(x_0)| + (1-ab)|x_j^*(x_i)| \leq a + |1-ab|\rho \\
&< a + \bigl(1-a \re b \bigr)\rho +a|{\rm Im}\ b|\rho
\\
&\le  a+ (1-a(1-\delta)) \rho + a\sqrt{2\delta}\rho=1. \qedhere
\end{align*}
\end{proof}

It is now time to present the main result of the section, namely the lifting property of the AHP from $X$ to $L_1(\mu, X)$, which we will use to get a lifting property of the AHSP. Recall that $L_1(\mu, X)$ is the space of all strongly measurable functions $f$ with
\[ \|f\|_1 = \int_\Omega \|f(\omega)\| \, d\mu <\infty.\]
That is, $L_1(\Omega, X)$ is the completion of the space of all simple functions with support of finite measure.

\begin{theorem}
Let $(\Omega,\Sigma,\mu)$ be a measure space and let $X$ be a Banach space. Suppose that there exists a function $\delta$ such that for every separable subspace $Y$ of $X$, there exists a subspace $Z$ of $X$ containing $Y$ and having AHP with function $\delta$. Then, there exists a function $\delta_1$ such that every separable subspace of $L_1(\mu, X)$ is contained in a subspace of $L_1(\mu, X)$ which has the AHP with the function $\delta_1$. Moreover, if $X$ has the AHP, then $L_1(\mu, X)$ has the AHP.
\end{theorem}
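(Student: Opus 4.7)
The plan is to prove the ``moreover'' statement first and then deduce the general assertion by a separable reduction. For the reduction: given a separable subspace $Y\subseteq L_1(\mu,X)$, a countable dense subset of $Y$ has combined essential range contained in some separable $Y_X\subseteq X$, and then $Y\subseteq L_1(\mu,Y_X)$; by hypothesis there is $Z\subseteq X$ containing $Y_X$ having AHP with gauge $\delta$, and applying the moreover part to $Z$ yields AHP for the subspace $L_1(\mu,Z)\subseteq L_1(\mu,X)$, which contains $Y$. So the real task is: if $X$ has AHP with gauge $\delta$, then $L_1(\mu,X)$ has AHP with a gauge $\delta_1$ depending only on $\delta$.

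For this, let $C_X\subseteq S_{X^*}$ (rounded), $\delta$, and $\Upsilon_{X,\eps}:C_X\to S_{X^*}$ witness AHP of $X$. I propose as norming set for $L_1(\mu,X)$ the collection
\[
C=\Bigl\{\sum_{i=1}^n\chi_{E_i}x_i^*\,:\,n\in\N,\ (E_i)\text{ disjoint of finite positive measure},\ x_i^*\in C_X\Bigr\}\subseteq L_\infty(\mu,X^*)\subseteq L_1(\mu,X)^*,
\]
which is rounded, sits in $S_{L_1(\mu,X)^*}$, and is norming on the dense set of simple functions of $L_1(\mu,X)$ (build a near-norming functional on $f=\sum\chi_{F_j}y_j$ by selecting $x_j^*\in C_X$ almost attaining $\|y_j\|$ and rotating suitably, using that $C_X$ is rounded). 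For $h=\sum\chi_{E_i}x_i^*\in C$ set
\[
\Upsilon_{L_1(\mu,X),\eps}(h):=\sum_i\chi_{E_i}\Upsilon_{X,\eps/3}(x_i^*)\in S_{L_1(\mu,X)^*},
\]
well-defined via common refinement. Given $\eps\in(0,1)$, I expect $\delta_1(\eps):=\delta(\eps/3)\cdot\eps/3$ to work. Take a simple $f\in S_{L_1(\mu,X)}$ (these are dense in the sphere) and $h\in C$ with $\re h(f)>1-\delta_1(\eps)$. Refining to a common partition, write $f=\sum_{i=1}^n\chi_{E_i}x_i$ and $h=\sum_{i=1}^n\chi_{E_i}x_i^*$, with $\sum_i\mu(E_i)\|x_i\|=1$. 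Setting $a_i=\mu(E_i)\|x_i\|$ and $r_i=\re x_i^*(x_i/\|x_i\|)$ (and $r_i=1$ when $x_i=0$), the hypothesis reads $\sum a_ir_i>1-\delta_1(\eps)$, so a Markov estimate on $A:=\{i:r_i>1-\delta(\eps/3)\}$ gives $\sum_{i\in A}a_i>1-\eps/3$. For each $i\in A$, AHP of $X$ supplies $z_i\in S_X$ with $y_i^*(z_i)=1$ for $y_i^*:=\Upsilon_{X,\eps/3}(x_i^*)$ and $\|z_i-x_i/\|x_i\|\|<\eps/3$. With $\tilde h:=\Upsilon_{L_1(\mu,X),\eps}(h)$ and $z:=\sum_{i\in A}\chi_{E_i}\|x_i\|z_i$, one checks $\tilde h(z)=\sum_{i\in A}a_i=\|z\|_1$, so $z':=z/\|z\|_1\in F(\tilde h)$, and a triangle inequality gives
\[
\|f-z'\|_1\le\|f-z\|_1+(1-\|z\|_1)<\eps,
\]
since $\|f-z\|_1<(\eps/3)\sum_{i\in A}a_i+\sum_{i\notin A}a_i<2\eps/3$ and $1-\|z\|_1<\eps/3$. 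Hence $\dist(f,F(\tilde h))<\eps$.

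Three places need care: (i) verifying that $C$ is genuinely norming on the dense simple functions, which reduces to a pointwise selection from $C_X$ with rotation; (ii) the convex-averaging extraction of $A$, which is the same trick already used inside the proof of Proposition~\ref{prop-AHPimpliesAHSP}; and (iii) in the non-separable case, aligning the (finitely many) values of the simple functions being tested with a common subspace $Z\subseteq X$ carrying AHP\,---\,this is precisely where the hypothesis on separable subspaces, combined with the fact that strongly measurable functions have essentially separable range, gets used. The main obstacle is bookkeeping: constants must be tracked so that the output gauge $\delta_1$ depends only on $\delta$ (and not on $\mu$ or on $X$), which is exactly what permits the separable reduction.
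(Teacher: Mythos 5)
Your proposal is correct and follows essentially the same route as the paper's proof: the same norming set of simple $C_X$-valued functionals, the same blockwise definition of $\Upsilon$ on a common refinement, the same Markov-type extraction of the good index set, and the same renormalized patched function, differing only in the choice of constants ($\eps/3$ versus the paper's $\eps/2$ and $\delta_1(\eps)=\tfrac14\eps\delta(\eps/2)$) and in spelling out the separable reduction slightly more explicitly.
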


\begin{proof}
Let $\tilde{Y}$ be a separable subspace of $L_1(\mu, X)$. Then it is contained in $L_1(\mu, Z)$, where $Z$ is a separable subspace of $X$. So we are done if we assume that the whole space $X$ has the AHP and prove that $L_1(\mu,X)$ does. Let $C$ be the subset of $S_{X^*}$ norming for $X$ and $\delta$ and $\Upsilon_{X,\eps}$ the functions,  given by the definition of the AHP. First, observe that the set $\widetilde{C}$ of those elements in $L_\infty(\mu,X^*)\subset L_1(\mu,X)^*$ of the form
$$
\sum_{i=1}^N x_i^*\chi_{A_i}
$$
where $\{A_i\}_{i=1}^N$ is a disjoint family of measurable subsets with $0<\mu(A_i)<\infty$ and $x_i^*\in C$ for all $i$, is norming for $L_1(\mu,X)$. Now, fix $\eps\in (0,1)$.  For  $\phi=\sum_{i=1}^N x_i^*\chi_{A_i} \in \widetilde{C}$, define
$$
\Upsilon_{L_1(\mu,X),\eps}(\phi)=\sum_{i=1}^N \Upsilon_{X,\eps/2}(x_i^*)\chi_{A_i}
$$
and observe that this definition does not depend on the particular decomposition of $\phi$. Write $\delta_1(\eps)=\frac14 \eps\delta(\eps/2)$. Consider $\phi\in \widetilde{C}$ and a simple function $g\in L_1(\mu,X)$ with $\|g\|=1$ such that $\re \phi(g)>1-\delta_1(\eps)$ (as simple functions are dense in $L_1(\mu,X)$, it is enough tho check the property for them). Then $g$ has the form $g=\sum_{i=1}^N x_i \chi_{A_i}$ where $\{A_i\}_{i=1}^N$ is a disjoint family of measurable subsets with $0<\mu(A_i)<\infty$ and $x_i\in X \setminus \{0\}$ for every $i$. Besides, considering a finer partition if needed, we may and do suppose that $\phi=\sum_{i=1}^N x_i^* \chi_{A_i}$.

Next, let $E = \{ i \,:\, 1\leq i \leq N,\ \re x_i^*(x_i) > (1-\delta(\eps/2)) \|x_i\|\}$. Then, by the AHP of $X$, for each $i\in E$, there is $z_i\in F(\Upsilon_{X,\eps/2}(x_i^*))$ such that
$\left\|z_i - \frac{x_i}{\|x_i\|} \right\|<\varepsilon/2$.
Hence, by setting $y_i = \|x_i\|z_i$, we have
$$
\Upsilon_{X,\eps/2}(x_i^*)(y_i) =  \|x_i\|=\|y_i\| \quad \text{and}\quad \|y_i - x_i \| < \eps/2 \|x_i\|\qquad (i\in E).
$$
By the assumption, we have
\begin{align*}
1-\delta_1(\eps) &< \re \phi(g) = \sum_{i\in E} \re x_i^*(x_i) \mu(A_i) +  \sum_{i\in E^c} \re x_i^*(x_i) \mu(A_i) \\
&\leq  \sum_{i\in E} \re x_i^*(x_i) \mu(A_i)  +  \sum_{i\in E^c} (1-\delta(\eps/2)) \|x_i\|\mu(A_i) \\
&\leq  \sum_{i\in E} \re x_i^*(x_i) \mu(A_i)  + (1-\delta(\eps/2)) \left( 1-\sum_{i\in E} \|x_i\|\mu(A_i)\right)\\
&\leq \delta(\eps/2)  \sum_{i\in E} \re x_i^*(x_i) \mu(A_i)  + 1-\delta(\eps/2),
\end{align*}
where $E^c=\{1,\dots, N\} \setminus E$.
Hence
\[
\sum_{i\in E} \re x_i^*(x_i) \mu(A_i)  > 1- \frac{\delta_1(\eps)}{\delta(\eps/2)}=1-\frac{\eps}4.
\]
In particular, $\beta=  \sum_{i\in E}  \|x_i\| \mu(A_i)>  1-\frac{ \eps}4>0$ and $E$ is not empty. Finally, let $f = \frac{1}{\beta}\sum_{i\in E} y_i \chi_{A_i}$ in $L_1(\mu, X)$. Then $$
\Upsilon_{L_1(\mu,X),\eps}(\phi)(f)=\frac{1}{\beta}\sum_{i\in E} \Upsilon_{X,\eps/2}(x_i^*)(y_i)\mu(A_i)=\frac{1}{\beta}\sum_{i\in E} \|x_i\|\mu(A_i)=1=\|f\|
$$
and
\begin{align*}
\|g - f\|  & \leq \sum_{i\in E} \nor{ \frac{y_i}{\beta} - x_i} \mu(A_i) + \sum_{i\in E^c} \|x_i \| \mu(A_i) \\
&\leq  \sum_{i\in E} \nor{ \frac{y_i}{\beta} - y_i} \mu(A_i) +  \sum_{i\in E} \nor{ x_i - y_i} \mu(A_i) + \sum_{i\in E^c} \|x_i \| \mu(A_i) \\
&\leq  \sum_{i\in E} \frac{\eps}{2} \nor{ x_i} \mu(A_i) + 2(1-\beta)<\eps.\qedhere
\end{align*}
\end{proof}

We particularize the above result to the known examples of spaces with the AHP. This generalizes \cite[Theorem~14]{ChoiKimSK}, where the result was only given for uniformly convex $X$'s.

\begin{corollary} Let $(\Omega,\Sigma,\mu)$ be a measure space and let $X$ be a Banach space. Then $L_1(\mu,X)$ has the AHP (and so the AHSP), provided either of the following holds:
\begin{enumerate}
\item $X$ is finite-dimensional.
\item $X$ is uniformly convex.
\item $X$ is lush and separable.
\item $X$ is an almost-CL-space.
\item $X$ has property $\beta$.
\end{enumerate}
\end{corollary}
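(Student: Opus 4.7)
My plan is to observe that in each of the five listed cases the hypotheses of the preceding theorem are met, so $L_1(\mu,X)$ has the AHP; the AHSP then follows automatically from Proposition~\ref{prop-AHPimpliesAHSP}. Thus the corollary reduces to a short checklist, with all substantive work already carried out in the earlier propositions of the section and in the theorem itself.

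For cases (1), (2), (4) and (5) I would apply the ``moreover'' clause of the theorem, which only requires that $X$ have the AHP. This is established by Proposition~\ref{prop-fin-dim-AHP} in case (1), by the uniformly convex proposition in case (2), by the almost-CL-space proposition in case (4), and by Proposition~\ref{prop-betaimpliesAHP} in case (5). In each of these four cases the conclusion is immediate once the corresponding reference is cited. For case (3), where $X$ is lush and separable, Proposition~\ref{prop-lush-separable} gives that $X$ has the AHP, and the ``moreover'' clause applies again.

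There is no real obstacle; the only judgement call is whether to use the ``moreover'' clause or the more general hypothesis of the theorem in case (3). If one wished to drop the separability assumption on the lush space, one could instead invoke the main (non-``moreover'') part of the theorem by using Proposition~\ref{prop:nonseparable-lush}, which provides a universal function $\widetilde{\delta}$ such that every separable subspace $Y$ of an arbitrary lush $X$ is contained in a separable subspace $Z\subseteq X$ with the AHP for $\widetilde{\delta}$; this is exactly the hypothesis of the theorem, and would yield the AHP of $L_1(\mu,X)$ for every lush $X$, not merely the separable ones.
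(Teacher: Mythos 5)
Your argument for the five listed cases is correct and is exactly the paper's (implicit) proof: the paper offers no separate argument for this corollary, treating it as the immediate combination of the ``moreover'' clause of the preceding theorem with the propositions asserting that each of the five classes has the AHP.

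One caveat about your closing remark. Invoking the main (non-``moreover'') part of the theorem for an arbitrary lush $X$ via Proposition~\ref{prop:nonseparable-lush} does \emph{not} yield the AHP of $L_1(\mu,X)$. The conclusion of that part of the theorem is only that every separable subspace of $L_1(\mu,X)$ sits inside a subspace of $L_1(\mu,X)$ having the AHP with a uniform function $\delta_1$; combined with Proposition~\ref{prop-AHPimpliesAHSP} this gives the AHSP of $L_1(\mu,X)$, but not the AHP of the whole space. This is precisely why the paper records the non-separable lush case as a separate corollary with the weaker conclusion (``$L_1(\mu,X)$ has the AHSP'') rather than folding it into item (3) here.
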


For non-separable lush spaces we have the following result.

\begin{corollary}
Let $(\Omega,\Sigma,\mu)$ be a measure space and let $X$ be a (non-separable) lush space. Then every separable subspace of $L_1(\mu, X)$ is contained in a separable subspace of $L_1(\mu, X)$ with the AHP and the function $\delta$ in the definition of the AHP does not depend on subspaces. In particular, $L_1(\mu,X)$ has the AHSP.
\end{corollary}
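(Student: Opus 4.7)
The plan is to combine Proposition~\ref{prop:nonseparable-lush} with the preceding lifting theorem, supplementing it with a standard measure-theoretic reduction that keeps the enveloping subspace separable. The lifting theorem produces an AHP over-space of the shape $L_1(\mu,Z)$, which need not be separable unless $\mu$ is $\sigma$-finite with a countably generated measure algebra; carving out such a sub-situation inside $L_1(\mu,X)$ is the only extra ingredient needed for the present statement.

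Given a separable $\widetilde{Y}\subset L_1(\mu,X)$, I would fix a countable dense sequence $\{f_n\}\subset\widetilde{Y}$. Each $f_n$ has $\sigma$-finite support (by integrability) and essentially separable range (by strong measurability). Hence $\Omega_0:=\bigcup_n\{f_n\neq 0\}$ is $\sigma$-finite, $Z_0:=\overline{\operatorname{span}}\,\bigcup_n f_n(\Omega_0)$ is a separable subspace of $X$, and one can choose a countably generated sub-$\sigma$-algebra $\Sigma_0$ of $\Sigma\cap\Omega_0$ for which every $f_n$ is measurable. Then $\widetilde{Y}\subset L_1(\Omega_0,\Sigma_0,\mu;Z_0)$, and the latter embeds isometrically in $L_1(\mu,X)$ via extension by zero outside $\Omega_0$.

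Next, I apply Proposition~\ref{prop:nonseparable-lush} to $Z_0\subset X$ to obtain a separable subspace $Z$ of $X$, containing $Z_0$, that has the AHP for a universal function $\widetilde{\delta}$. The \emph{moreover} part of the preceding lifting theorem --- applied with $X$ replaced by $Z$ and $\mu$ replaced by its restriction to $(\Omega_0,\Sigma_0)$ --- then yields a function $\delta_1$ depending only on $\widetilde{\delta}$ (hence independent of $\widetilde{Y}$) such that $L_1(\Omega_0,\Sigma_0,\mu;Z)$ has the AHP with $\delta_1$. This over-space is separable, since $Z$ is separable and $\Sigma_0$ is countably generated over a $\sigma$-finite measure, and it contains $\widetilde{Y}$; this gives the first assertion of the corollary. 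The AHSP of $L_1(\mu,X)$ then drops out of Proposition~\ref{prop-AHPimpliesAHSP}, because every finite-dimensional --- hence separable --- subspace of $L_1(\mu,X)$ now sits inside a subspace with the AHP and the same universal function $\delta_1$.

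The main obstacle is the measure-theoretic reduction carried out in the second paragraph: one has to produce simultaneously a $\sigma$-finite subdomain, a countably generated sub-$\sigma$-algebra, and a separable target, and verify that the resulting $L_1$-box sits isometrically inside $L_1(\mu,X)$. This is routine but indispensable, since it is precisely what upgrades the preceding theorem's conclusion from \emph{some} AHP over-space to a \emph{separable} one.
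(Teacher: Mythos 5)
Your argument is correct and follows the route the paper intends: the corollary is given without proof as the combination of Proposition~\ref{prop:nonseparable-lush} with the preceding lifting theorem (whose $\delta_1(\eps)=\tfrac14\eps\,\widetilde{\delta}(\eps/2)$ is indeed universal), which is exactly your third and fourth paragraphs. The one ingredient you add --- passing to a $\sigma$-finite subdomain with a countably generated sub-$\sigma$-algebra --- does not appear in the paper but is genuinely needed for the statement as written: the lifting theorem only places $\widetilde{Y}$ inside \emph{some} AHP subspace of the form $L_1(\mu,Z)$ with $Z$ separable, which need not itself be separable for a general measure $\mu$, so your measure-theoretic reduction legitimately fills that gap rather than merely duplicating the paper's argument.
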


Let us observe that finite dimensionality, uniform convexity and property $\beta$ do not pass from $X$ to $L_1(\mu,X)$ if $L_1(\mu)$ is non-trivial. Whether $L_1(\mu,X)$ is lush for every lush space $X$ is not known to the best of our knowledge.

\section{On a possible extension of a theorem by Zizler}\label{sec:Zizler}

It is proved in \cite[Example~6.3]{AAGM2} that the classical Lindenstrauss theorem, proving the density of
the set of those operators acting between two arbitrary Banach spaces whose {\em second adjoint} attain the
norm, has no Bishop-Phelps-Bollob\'{a}s counterpart. We may wonder if the result by Zizler providing the density of the set of operators whose {\em first} adjoint attains the norm has a Bishop-Phelps-Bollob\'{a}s counterpart.
More concretely, we may ask about the validity of the
following Bishop-Phelps-Bollob\'{a}s version of Zizler's result:
\begin{quotation}
Given a pair $(X,Y)$ of Banach spaces, is
there a function $\gamma:(0,1)\longrightarrow \R^+$ such that for every $\eps\in (0,1)$, whenever $T_0\in L(X,Y)$ with $\|T_0\|=1$
and $y_0^*\in S_{Y^*}$ satisfy $\|T_0^*(y_0^*)\|>1-\gamma(\eps)$, then there exist $T\in L(X,Y)$
with $\|T\|=1$ and $y^*\in S_{Y^*}$ such that $\|T^*(y^*)\|=1$, $\|y_0^*-y^*\|<\eps$ and $\|T_0-T\|<\eps$?
\end{quotation}

The following easy example shows that this is not always possible.

\begin{example}
{\slshape Let $X$ be a smooth reflexive space whose dual is not super-reflexive and let $Y=\ell_\infty^2$. Then the question
above has a negative answer for $(X,Y)$.} Indeed, since $X$ and $Y$ are reflexive, the above question is equivalent to
whether $(Y^*,X^*)=(\ell_1^2,X^*)$ has the BPBp. Since $X^*$ is strictly convex, this would imply $X^*$ to
be uniformly convex by \cite[Corollary~3.3]{ACKLM-1}. This is impossible since $X^*$ is not super-reflexive.
\end{example}

We observe that it is immediate that if $Y$ is a reflexive space, then for every Banach space $X$ the
question of whether a pair $(X,Y)$ satisfies the BPB version of Zizler result is equivalent to the question of whether
$(Y^*,X^*)$ has the BPBp.  (This follows because every operator from $Y^*$ to $X^*$ is automatically $w^*$-$w^*$-continuous.)

Next, we investigate the Bishop-Phelps-Bollob\'{a}s version of Zizler result when the range space is
$c_0$. We have the following
result, whose proof is based on \cite[Theorem 4.1]{AAGM2}.

\begin{prop}\label{zBP} Given a Banach space $X$, $X^*$ has the AHSP if and only if there is
a function $\gamma:(0,1) \longrightarrow (0,1)$ such that for every $\eps\in (0,1)$, whenever $T_0\in L(X,c_0)$
with $\|T_0\|=1$ and $y_0^*\in S_{c_0^*}$ satisfy $\|T_0^*(y_0^*)\|>1-\gamma(\eps)$, then there exist $T\in L(X,c_0)$
with $\|T\|=1$ and $y^*\in S_{c_0^*}$ such that $\|T^*(y^*)\|=1$, $\|y_0^*-y^*\|<\eps$ and $\|T_0-T\|<\eps$.
\end{prop}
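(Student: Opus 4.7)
My plan is to exploit the natural identifications $L(X,c_0)\cong\{w^*\text{-null bounded sequences in }X^*\}$ and $c_0^*\cong\ell_1$: to $T\in L(X,c_0)$ associate the sequence $(x_n^*)\subset X^*$ with $T(x)=(x_n^*(x))_n$ (so that $\|T\|=\sup_n\|x_n^*\|$), and to $y^*\in\ell_1$ the sequence $(\alpha_n)_n$, so that $T^*(y^*)=\sum_n\alpha_n x_n^*\in X^*$. The crucial preliminary observation is that $\|T^*(y^*)\|=1$ with $\|y^*\|=1$ is witnessed by some $x^{**}\in S_{X^{**}}$ with $\sum_n\alpha_n x^{**}(x_n^*)=1$; since $\sum_n|\alpha_n|=1$ and $|x^{**}(x_n^*)|\leq 1$, equality in the triangle inequality forces $\|x_n^*\|=1$ and $x^{**}(x_n^*)=\overline{\sign(\alpha_n)}$ for every $n$ with $\alpha_n\neq 0$. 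This is already essentially the AHSP conclusion for the sequence $(x_n^*)$ on the support of $y^*$, which is what makes the equivalence plausible.

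For the implication $\Leftarrow$, I would verify AHSP of $X^*$ on a finite convex combination (as remarked after the definition of AHSP). Given $(x_n^*)_{n=1}^N\subset S_{X^*}$ and weights $(\alpha_n)$ with $\|\sum_n\alpha_n x_n^*\|>1-\gamma(\eps')$, set
\[
T_0(x)=\bigl(x_1^*(x),\dots,x_N^*(x),0,0,\dots\bigr)\in c_0,\qquad y_0^*=(\alpha_n)_{n=1}^N\in S_{\ell_1},
\]
apply the hypothesis to obtain $T$ and $y^*$ with $\|T^*(y^*)\|=1$ and $\|T-T_0\|,\|y^*-y_0^*\|<\eps'$, and write $T(x)=(y_n^*(x))$, $y^*=(\beta_n)$. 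The preliminary observation produces $x^{**}\in S_{X^{**}}$ and unit vectors $z_n^*:=\overline{\sign(\beta_n)}y_n^*$ with $x^{**}(z_n^*)=1$ whenever $\beta_n\neq 0$. I would take $A$ to be the indices where $\sign(\beta_n)$ is close to $1$, so that $z_n^*\approx y_n^*\approx x_n^*$; a case split on whether $\beta_n$ is near zero or has the wrong argument, combined with $\|y^*-y_0^*\|<\eps'$, shows $\sum_{n\in A}\alpha_n>1-\eps$ and $\|z_n^*-x_n^*\|<\eps$ provided $\eps'$ is chosen of order $\eps^2$.

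For the implication $\Rightarrow$, given $T_0$ with $\|T_0\|=1$ and $y_0^*=(\alpha_n)$ with $\|T_0^*(y_0^*)\|>1-\gamma$, multiplying each $x_n^*$ by $\overline{\sign(\alpha_n)}$ and compensating on $c_0$ reduces the situation to $\alpha_n\geq 0$. After a truncation in $n$, the main technical point is that AHSP wants unit vectors whereas only $\|x_n^*\|\leq 1$ is guaranteed. I would therefore pass to $E=\{n:\|x_n^*\|>1-\sqrt{\gamma}\}$, using $\sum_n\alpha_n\|x_n^*\|\geq\|T_0^*(y_0^*)\|>1-\gamma$ to bound $\sum_{n\notin E}\alpha_n\leq\sqrt{\gamma}$; normalising $\tilde x_n^*=x_n^*/\|x_n^*\|$ and the weights $\beta_n=\alpha_n/s$ with $s=\sum_{n\in E}\alpha_n$ then produces a finite convex combination of unit vectors of norm $\geq 1-O(\sqrt{\gamma})$. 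AHSP of $X^*$ (at parameter $\eps/4$, say) yields $A\subset E$, $x^{**}\in S_{X^{**}}$ and $\{z_n^*\}_{n\in A}\subset S_{X^*}$ with $\sum_{n\in A}\beta_n>1-\eps/4$, $\|z_n^*-\tilde x_n^*\|<\eps/4$ and $x^{**}(z_n^*)=1$. Finally, define $T\in L(X,c_0)$ by replacing $x_n^*$ with $z_n^*$ for $n\in A$ (the replacement is on finitely many coordinates, so $T$ still lands in $c_0$) and $y^*\in S_{\ell_1}$ as the probability renormalisation of $(\alpha_n)_{n\in A}$; then $T^*(y^*)=\sum_{n\in A}(\alpha_n/s')z_n^*$ and $x^{**}(T^*(y^*))=1$, while the estimates $\|T-T_0\|\leq\eps/4+\sqrt{\gamma}$ and $\|y^*-y_0^*\|\leq 2(\sqrt{\gamma}+\eps/4)$ close the proof once $\gamma$ is chosen small enough.

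The principal obstacle is parameter tracking in $\Rightarrow$: one must simultaneously truncate, discard low-norm coordinates, normalise vectors and renormalise weights while keeping every perturbation controlled by $\eps$, producing a final function of the form $\gamma(\eps)\sim\min\{\eps^2,\eta(\eps/4)^2\}$ in terms of the AHSP modulus $\eta$ of $X^*$. Everything else reduces to the preliminary observation about equality in the triangle inequality for $\ell_1$.
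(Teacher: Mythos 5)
Your proposal follows essentially the same route as the paper's proof: both directions rest on identifying $L(X,c_0)$ with $w^*$-null sequences in $X^*$, on the equality case of the triangle inequality in $\ell_1$ to produce the witnessing $x^{**}\in S_{X^{**}}$ and the rotated functionals $\overline{\sign(\beta_n)}\,y_n^*$, and on replacing the operator coordinate-wise on the index set $A$ while renormalising the $\ell_1$-weights. The only substantive differences are that you handle explicitly (via the set $E$ and the normalisation $\tilde x_n^*=x_n^*/\|x_n^*\|$) the fact that the $T_0^*(e_n^*)$ need not be unit vectors, where the paper tacitly invokes the ball version of the AHSP, and that in your $\Leftarrow$ direction the admissible $\eps'$ must be of order $\eps^3$ rather than $\eps^2$ (compare the paper's choice $1-\rho\sim\eps^2$ and $\eps'<\eps(1-\rho)/2$), a harmless adjustment of constants.
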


We restate this result as the following corollary.

\begin{corollary}
Let $Y$ be a dual space. Then the pair $(\ell_1,Y)$ has the BPBp if and only if there is a function $\gamma:(0,1) \longrightarrow (0,1)$ such that for every $\eps\in (0,1)$, whenever a $w^*$-$w^*$-continuous $T_0\in S_{L(\ell_1,Y)}$ and
$y_0\in S_{\ell_1}$ satisfy $\|T_0(y_0)\|>1-\gamma(\eps)$, then there exist a $w^*$-$w^*$-continuous $T\in S_{L(\ell_1,Y)}$
and $y\in S_{\ell_1}$ such that $\|T(y)\|=1$, $\|y_0-y\|<\eps$ and $\|T_0-T\|<\eps$ (that is, in
the definition of BPBp we may restrict ourselves to $w^*$-$w^*$-continuous operators.)
\end{corollary}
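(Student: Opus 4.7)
The plan is to combine two ingredients already available in the excerpt: (a) the Acosta--Aron--Garc\'{\i}a--Maestre characterization recalled in the introduction, namely that $(\ell_1,Y)$ has the BPBp if and only if $Y$ has the AHSP; and (b) Proposition~\ref{zBP}, which characterizes the AHSP of a dual space $X^*$ via a condition on operators in $L(X,c_0)$. Pick any predual $Z$ of $Y$, so $Y=Z^*$. Applying Proposition~\ref{zBP} with $X=Z$ yields that $Y$ has the AHSP if and only if there is a function $\gamma$ such that, for every $\eps\in(0,1)$, every $T_0\in S_{L(Z,c_0)}$ and every $y_0^*\in S_{c_0^*}$ with $\|T_0^*(y_0^*)\|>1-\gamma(\eps)$ can be perturbed to some $T\in S_{L(Z,c_0)}$ and $y^*\in S_{c_0^*}$ satisfying $\|T^*(y^*)\|=1$, $\|y_0^*-y^*\|<\eps$, $\|T_0-T\|<\eps$.

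Next I would translate this $L(Z,c_0)$-statement to the $L(\ell_1,Y)$-statement via the adjoint operation. Using the identification $c_0^*=\ell_1$, the classical fact is that the map $S\longmapsto S^*$ is an isometric bijection between $L(Z,c_0)$ and the subspace of $L(\ell_1,Y)$ consisting of $w^*$-$w^*$-continuous operators (with respect to $\sigma(\ell_1,c_0)$ on the domain and $\sigma(Z^*,Z)$ on the range). I would state this explicitly once and then apply it to transport both hypothesis and conclusion: with $R_0:=T_0^*$, $R:=T^*$, $y_0:=y_0^*$ and $y:=y^*$, the norms $\|T_0\|,\|T\|,\|T_0-T\|$ coincide with $\|R_0\|,\|R\|,\|R_0-R\|$, while the quantities $\|T_0^*(y_0^*)\|$ and $\|T^*(y^*)\|$ coincide with $\|R_0(y_0)\|$ and $\|R(y)\|$, measured in $Y$. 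Thus the Proposition~\ref{zBP} condition for $X=Z$ is \emph{verbatim} the corollary's condition once the adjoint identification is in place.

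Chaining the three equivalences gives the result: $(\ell_1,Y)$ has the BPBp iff $Y$ has the AHSP (by (a)) iff the $L(Z,c_0)$-condition from Proposition~\ref{zBP} holds (for $X=Z$) iff the $w^*$-$w^*$-continuous BPBp-type condition for $(\ell_1,Y)$ holds (by the adjoint correspondence). There is no genuinely hard step; the main (and only) point to be careful about is verifying that every $w^*$-$w^*$-continuous $R\in L(\ell_1,Y)$ actually arises as the adjoint of some $S\in L(Z,c_0)$ (so that \emph{every} candidate pair $(R_0,y_0)$ in the corollary's hypothesis really comes from a pair $(T_0,y_0^*)$ to which Proposition~\ref{zBP} can be applied), and symmetrically that the $T$ produced by Proposition~\ref{zBP} yields a $w^*$-$w^*$-continuous $R=T^*$; both are immediate from the standard characterization of $w^*$-$w^*$-continuous maps between dual Banach spaces as precisely the adjoint operators.
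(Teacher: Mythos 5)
Your proposal is correct and is exactly the argument the paper intends: the authors explicitly present the corollary as a restatement of Proposition~\ref{zBP}, obtained by combining the Acosta--Aron--Garc\'{\i}a--Maestre equivalence (BPBp for $(\ell_1,Y)$ $\Leftrightarrow$ AHSP for $Y$) with the standard identification of $L(Z,c_0)$, via $S\mapsto S^*$, with the $w^*$-$w^*$-continuous operators in $L(\ell_1,Z^*)$. Your explicit care about surjectivity of the adjoint correspondence onto the $w^*$-$w^*$-continuous operators is the one point the paper leaves implicit, and it is handled correctly.
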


\begin{proof}[Proof of Proposition~\ref{zBP}]
This proof is based on the one of \cite[Theorem 4.1]{AAGM2}. But for the sake of completeness we give details. For convenience, let $(e_i)$ be the basis of $c_0$ and $(e^*_i)$ be the basis of $\ell_1$.

Suppose that $X^*$ has the AHSP with a function $\eta(\eps)>0$. We fix $T_0\in L(X,c_0)$ with $\|T_0\|=1$ and $y_0^*\in S_{c_0^*}$
satisfying $\|T_0^*(y_0^*)\|>1-\eta(\eps)$. Since the set of finite convex sum of $(e_i^*)$ is dense in $\ell_1$, we may assume
that $y_0^*=\sum_{i=1}^n\alpha_i e^*_i$. Moreover, by composing with an appropriate $w^*$-$w^*$-continuous isometry, we may assume that $\alpha_i\geq 0$ for all $i$.
Since
$$
1-\eta(\eps)<\|T^*_0(y^*_0)\|=\left\|\sum_{i=1}^n \alpha_i T_0^*(e^*_i)\right\|,
$$
we apply the definition of AHSP to
find $A\subset\{1,\ldots ,n\}$, $x^{**}\in S_{X^{**}}$, and $(x_i^*)_{i\in A}$ such
that
$$
\sum_{i\in A}\alpha_i>1-\eps,\qquad \|T^*(e^*_i)-x^*_i\|<\eps \quad \text{and} \quad x^{**}(x^*_i)=1 \quad \bigl(i\in A\bigr).
$$
Define $S\in S_{\mathcal{L}(X,c_0)}$ and $y^*\in S_{\ell_1}$ by
$$
S(x)=\sum_{i\in A}x_i^*(x)e_i+\sum_{i\in \mathbb{N}\backslash A }T^*(e^*_i)(x) e_i \ \ (x\in X), \quad \text{and}\quad y^*=\dfrac{\sum_{i\in A}\alpha_i e^*_i}{\sum_{i\in A}\alpha_i}.
$$
We can see that $S^*(e^*_i)=x^*_i$ for every $i\in A$ and $S^*(e^*_i)=T^*(e^*_i)$ for every $i\in \mathbb{N}\backslash A$.
Therefore, we get that $\|S^*(y^*)\|=1$ and $\|T-S\|<\eps$. Moreover, $\|y^*-y_0^*\|<2\eps$, and so $\gamma(\eps)=\eta(\eps/2)$ completes our proof.\\

For the converse, given $1>\eps>0$, choose $\rho,\eps'\in (0,1)$ such that
$$
0<\sqrt{2(1-\rho)}<\eps/2,\quad  0<\eps'<\eps/2 \quad \text{and} \quad \frac{\eps'}{1-\rho}<\eps/2.
$$
Consider a finite convex series $\sum_{i=1}^{n}\alpha_i$ and a sequence $(x^*_i)_{i=1}^n \subset S_{X^*}$ satisfying
$\|\sum_{i=1}^{n}\alpha_ix^*_i\| > 1-\gamma(\eps')$. Write $z_0^*=\sum_{i=1}^n \alpha_i\,e_i^*\,\in S_{\ell_1}$ and define the operator $S_0\in \mathcal{L}(X,c_0)$ by
$$
S_0(x)=\sum_{i=1}^n x_i^*(x)e_i \quad (x\in X)
$$
which clearly satisfies $\|S_0\|=1$.
Since $\|S^*_0(z_0^*)\|=
\|\sum_{i=1}^{n}\alpha_ix^*_i\|>1-\gamma(\eps')$, by hypothesis there exist $S\in \mathcal{L}(X,c_0)$ with $\|S\|=1$ and $z^*=
\sum_{i=1}^\infty z^*(i)e_i^* \in S_{\ell_1}$ such that
$$
\|S^*(z^*)\|=1,\quad \|S-S_0\|<\eps',\quad \text{and} \quad \|z^*_0-z^*\|<\eps'.
$$
It then follows that
$$
\sum_{i=1}^{n}(\alpha_i- \re z^*(i))<\|z^*_0-z^*\|<\eps',
 $$
and so $\sum_{i=1}^{n} \re
z^*(i)>1-\eps'$. Set $A=\{n\in \{1,\ldots ,n\}~:~\re z^*(i)>\rho |z^*(i)|\}$. Since
\begin{align*}
 1-\eps'
 &<\sum_{i=1}^{n} \re z^*(i)= \sum_{i\in A} \re z^*(i) + \sum_{i\in \{1,\ldots,n\}\backslash A} \re z^*(i)\\
 &\leq \sum_{i\in A} \re z^*(i) + \rho\sum_{i\in \{1,\ldots ,n\}\backslash A} | z^*(i)| \\
 &\leq \sum_{i\in A} \re z^*(i) + \rho\left(1-\sum_{i\in A} | z^*(i)|\right)\\
 &\leq \sum_{i\in A} \re z^*(i) + \rho\left(1-\sum_{i\in A} \re z^*(i)\right),
\end{align*}
we get $\sum_{i\in A} \re z^*(i)>1-\tfrac{\eps' }{1-\rho}$. Hence,
\begin{align*}
\sum_{i\in A} \alpha_i
&\geq \sum_{i\in A} \re z^*(i)-\|z^*_0-z^*\|\\
&\geq 1-\frac{\eps' }{1-\rho}-\eps'>1-\eps.
\end{align*}
On the other hand, choose $x^{**}\in S_{X^{**}}$ so that
$$
1=x^{**}S^*(z^*)=\sum_{i=1}^\infty x^{**}( z^*(i) S^*(e^*_i)).
$$
Set $y^*_i=\tfrac{z^*(i)}{| z^*(i)|}\, S^*(e^*_i) \in S_{X^*}$ for every $i\in A$ and observe that $x^{**}(y_i^*)=1$ for each $i\in A$.
Since
$$
\left|1-\frac{z^*(i)}{| z^*(i)|}\right|<\sqrt{2(1-\rho)}<\eps/2
$$
for every $i\in A$, we get
\begin{align*}
\|x^*_i-y^*_i\|
&=\left\|\frac{z^*(i)}{| z^*(i)|} S^*(e^*_i)-S^*_0(e^*_i)\right\|\\
&\leq \left\|\frac{z^*(i)}{|z^*(i)|} S^*(e^*_i)- S^*(e^*_i)\right\|+\| S^*(e^*_i)-S^*_0(e^*_i)\|\\
&<\eps/2+\eps/2=\eps.
\end{align*}
Finally, $\eta(\eps)=\gamma(\eps')$, the set $A$, the sequence $(y^*_i)_{i\in A}\subset S_{X^*}$ and the functional $x^{**}\in S_{X^{**}}$ complete our proof.
\end{proof}

Very recently, another ``flavor'' of the approximate hyperplane series property has been studied, namely the {\em AHSP for a pair $(X,X^*)$}. This property was introduced in \cite{ABGM} to characterize those Banach spaces $X$ such that $(\ell_1,X)$ has the \emph{BPBp} for bilinear forms.

\begin{definition}[\textrm{\cite{ABGM}}]\label{AHSP2}
Let $X$ be a Banach space. We say that the pair $(X,X^*)$ has the \emph{approximate hyperplane series property for dual pairs}
(\emph{AHSP}) if for every $\eps\in (0,1)$ there exists $0<\eta(\eps)<\eps$ such that for every convex series
$\sum_{n=1}^{\infty}\alpha_k$ and for every sequence $(x^*_k) \subset S_{X^*}$ and $x_0\in S_X$
with
$$
\re \sum_{n=1}^{\infty}\alpha_k x^*_k(x_0) > 1-\eta(\eps)
$$
there exist a subset $A\subset \mathbb{N}$, a subset $\{z_k^*\, :\, k\in A\}\subset S_{X^*}$ and $z_0\in S_X$ satisfying
\begin{itemize}
\item[(1)] $\sum_{k\in A} \alpha_k>1-\eps$,
\item[(2)] $\|z_0-x_0\|<\eps$,
$\|z^*_k-x^*_k\|<\eps$ for all $k\in A$, and $z^*_k(z_0)=1$ for all $k\in A$.
\end{itemize}
\end{definition}

It is clear that by assuming the condition above just for finite sequences $(x_k)$ and $(x^*_k)$, an equivalent
property is obtained. As we already remarked, it is shown in \cite{ABGM} that $(\ell_1, X)$ has the BPBp for
bilinear forms if and only if $(X,X^*)$ has the AHSP. It follows directly from the definition that if a pair $(X,X^*)$ has the AHSP, then $X^*$ has the AHSP. However the converse is not true, since $\ell_\infty=\ell_1^*$ has the AHSP but the pair $(\ell_1,\ell_\infty)$ does not have the AHSP since $(\ell_1,\ell_1)$ does not have the BPBp for bilinear forms \cite{CS}. Moreover, it is known that
no pair of the form $(L_1(\mu),L_1(\mu)^*)$ has the AHSP in the infinite dimensional case \cite{ABGM}. On the other hand, the pair $(X,X^*)$ has the AHSP for the following spaces: finite dimensional $X$, uniformly smooth $X$, $X=C(K)$,
$X=c_0$, and $X=\mathcal{K}(H)$ (the space of compact operators on a Hilbert space $H$) \cite{ABGM}.

The following result characterizes the AHSP for a pair $(X,X^*)$ in terms of the validity of a version of Zizler's result.
Like Proposition~\ref{zBP}, its proof follows the lines of the argument in \cite[Theorem~4.1]{AAGM2}.

\begin{prop}\label{AHH} Given a Banach space $X$, the pair $(X,X^*)$ has the AHSP if
and only if there is a function $\gamma:(0,1) \longrightarrow (0,1)$ such that for every $\eps\in (0,1)$, whenever $T_0\in S_{L(X,c_0)},$ $y_0^*\in S_{c_0^*},$  and $x_0\in S_X$ satisfy
$\re y_0^*T_0(x_0)>1-\gamma(\eps)$, then there exist $T\in S_{L(X,c_0)},$ $y^*\in S_{c_0^*}$
and $x\in S_X$ such that $y^*T(x)=1$, $\|y_0^*-y^*\|<\eps$ $\|x_0-x\|<\eps$ and $\|T_0-T\|<\eps$.
\end{prop}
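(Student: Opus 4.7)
The plan is to adapt the argument used for Proposition~\ref{zBP}, now carrying the extra point $x_0\in S_X$ through both directions. The bridge between the two formulations is the identification, after reducing $y_0^*\in S_{c_0^*}$ to a finite nonnegative convex sum $y_0^* = \sum_{k=1}^n \alpha_k e_k^*$ (by density of such sums and composition with the sign-flipping $w^*$-$w^*$-isometries of $c_0$), of the functionals $x_k^* := T_0^*(e_k^*)\in B_{X^*}$ with the sequence appearing in the definition of the AHSP for $(X,X^*)$. Under this correspondence one has $\re y_0^*(T_0 x_0) = \re\sum_k \alpha_k x_k^*(x_0)$, which is exactly the quantity controlling the AHSP for dual pairs.

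For the direction ``AHSP for $(X,X^*)$ $\Rightarrow$ Zizler-type property'', I would start from $(T_0,y_0^*,x_0)$ satisfying $\re y_0^*(T_0 x_0)>1-\gamma(\eps)$, perform the reduction above, and then apply the AHSP of the pair to the finite convex combination $\sum_k\alpha_k x_k^*$ evaluated at $x_0$. The subtlety is that $x_k^*\in B_{X^*}$ rather than $S_{X^*}$; I would handle this by passing to the index set $B=\{k:\|x_k^*\|>1-\sqrt{\gamma(\eps)}\}$ (the complement contributes a $\sqrt{\gamma(\eps)}$-mass to the $\alpha_k$'s), replacing $x_k^*$ by $\tilde x_k^*:=x_k^*/\|x_k^*\|$ on $B$ and renormalizing the coefficients, and checking that the resulting convex combination of unit vectors still evaluates close to $1$ at $x_0$. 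The AHSP then supplies $A\subseteq B$, $z_0\in S_X$ close to $x_0$, and $z_k^*\in S_{X^*}$ close to $\tilde x_k^*$ with $z_k^*(z_0)=1$ for $k\in A$. Setting $T^*(e_k^*)=z_k^*$ for $k\in A$ and $T^*(e_k^*)=T_0^*(e_k^*)$ otherwise defines $T\in S_{L(X,c_0)}$ (membership in $c_0$ is inherited from $T_0$), and $y^*=\tau^{-1}\sum_{k\in A}\alpha_k e_k^*\in S_{\ell_1}$ with $\tau=\sum_{k\in A}\alpha_k$ satisfies $y^*(Tz_0)=1$. Direct computation gives $\|T-T_0\|<\sqrt{\gamma(\eps)}+\eps_1$ and $\|y^*-y_0^*\|=2(1-\tau)$, both controllable by a suitable initial choice of $\gamma$ and of the AHSP tolerance $\eps_1$.

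For the converse, I would copy the scheme of Proposition~\ref{zBP} almost verbatim. Given a finite convex combination $\sum_{k=1}^n\alpha_k$, functionals $(x_k^*)_{k=1}^n\subset S_{X^*}$ and $x_0\in S_X$ with $\re\sum_k\alpha_k x_k^*(x_0)>1-\eta$, set $T_0(x):=\sum_k x_k^*(x)e_k$ (so that $\|T_0\|=1$) and $y_0^*:=\sum_k\alpha_k e_k^*$; then $\re y_0^*(T_0 x_0)>1-\eta$. Applying the Zizler-type hypothesis with tolerance $\eps'<\eps$ produces $T,y^*,x$ with $y^*(Tx)=1$. Since $\sum_i|y^*(i)|=1$ and $|T^*(e_i^*)(x)|\leq 1$, the identity $1=\sum_i y^*(i)\,T^*(e_i^*)(x)$ forces, for every $i$ with $y^*(i)\neq 0$, that $z_i^*:=(y^*(i)/|y^*(i)|)\,T^*(e_i^*)$ lies in $S_{X^*}$ and satisfies $z_i^*(x)=1$. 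To extract the AHSP index set I would apply the same ``$\rho$-trick'' used in the proof of Proposition~\ref{zBP}: choose $\rho\in(0,1)$ close to $1$, set $A=\{k\leq n:\re y^*(k)>\rho|y^*(k)|\}$, and use $\|y_0^*-y^*\|<\eps'$ together with $\sum_{k=1}^n\re y^*(k)\geq 1-\eps'$ to deduce $\sum_{k\in A}\alpha_k>1-\eps$; for $k\in A$ the bound $\|z_k^*-x_k^*\|\leq\sqrt{2(1-\rho)}+\eps'$ becomes $<\eps$ after tuning $\rho$ and $\eps'$. Taking $z_0:=x$ finishes this direction.

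The main obstacle I expect is not conceptual but combinatorial: juggling the three perturbations ($T$, $y^*$, $x$) of the Zizler-type statement against the three outputs of the AHSP ($z_0$, $(z_k^*)$, lower bound on $\sum_{k\in A}\alpha_k$), while tuning the auxiliary parameters $\rho$, $\eps'$, $\eps_1$ and the normalization loss $\sqrt{\gamma(\eps)}$ so that every estimate fits under $\eps$ simultaneously. Once this bookkeeping is carried out, the argument is a faithful transfer of Proposition~\ref{zBP}, with the additional vector $x_0$ riding along as an auxiliary coordinate that does not introduce new difficulties.
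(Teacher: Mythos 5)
Your proposal is correct and follows exactly the route the paper intends: the paper omits this proof precisely because it is the adaptation of the proof of Proposition~\ref{zBP} that you carry out, with $x_0$ riding along and the identification $x_k^*=T_0^*(e_k^*)$, the sign-flip reduction of $y_0^*$, the operator modification $T^*(e_k^*)=z_k^*$ on $A$, and the $\rho$-trick in the converse. Your extra normalization step on $B=\{k:\|x_k^*\|>1-\sqrt{\gamma(\eps)}\}$ is a legitimate (and slightly more careful) way to handle the fact that the $T_0^*(e_k^*)$ need only lie in $B_{X^*}$, a point the paper's own proof of Proposition~\ref{zBP} glosses over.
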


It is not difficult to adapt the proof of Proposition~\ref{zBP} to this case, so we omit it.

Our last result in this section shows that the pair $(C_0(K,Y), C_0(K,Y)^*)$ has the AHSP when $Y$ is a uniformly smooth space. This generalizes \cite[Corollary~4.5]{ABGM} where the result was proved for $Y=\K$.

\begin{theorem}\label{thm:C(K)AHSP}
Let $K$ be a locally compact Hausdorff space and $Y$ be a uniformly smooth space. Then the pair $(C_0(K,Y), C_0(K,Y)^*)$ has the AHSP.
\end{theorem}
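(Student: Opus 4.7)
The plan is to verify Definition~\ref{AHSP2} directly, adapting the scalar argument of \cite[Corollary~4.5]{ABGM} by exploiting two standard consequences of the uniform smoothness of $Y$ (equivalently, the uniform convexity of $Y^*$). First, $Y^*$ has the Radon--Nikodym property, so every $\phi\in S_{C_0(K,Y)^*}$ admits a representation
\[
\phi(f)=\int_K\langle f(t),g(t)\rangle\,d|\phi|(t),\qquad f\in C_0(K,Y),
\]
for some Bochner-measurable $g\colon K\to S_{Y^*}$. Second, the duality mapping $J\colon S_Y\to S_{Y^*}$ is single-valued and uniformly continuous, and $Y^*$ has a modulus of convexity $\delta_{Y^*}$ yielding the Bishop--Phelps--Bollob\'as-type estimate
\[
g\in S_{Y^*},\ y\in S_Y,\ \re g(y)>1-\delta_{Y^*}(\eps)\ \Longrightarrow\ \|g-J(y)\|<\eps.
\]
It is this last fact that lets us perturb each $g_k$ into a functional that attains at a common modified vector.

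Given $\eps\in(0,1)$, fix $\eta>0$ sufficiently small (depending on $\eps$ and $\delta_{Y^*}$), and take a finite convex combination with $(\phi_k)\subset S_{C_0(K,Y)^*}$ and $f_0\in S_{C_0(K,Y)}$ satisfying $\re\sum_k\alpha_k\phi_k(f_0)>1-\eta$. A Chebyshev-type argument (as in the proof of Proposition~\ref{prop-AHPimpliesAHSP}) yields a subset $A_1$ of indices with $\sum_{k\in A_1}\alpha_k>1-\sqrt\eta$ and $\re\phi_k(f_0)>1-\sqrt\eta$ for every $k\in A_1$. Representing each $\phi_k$ as above and applying a second Chebyshev estimate, the measurable set $E_k:=\{t\in K:\re\langle f_0(t),g_k(t)\rangle>1-\eta^{1/4}\}$ satisfies $|\phi_k|(E_k)>1-\eta^{1/4}$; by inner regularity we pass to a compact $E_k'\subset E_k$ with $|\phi_k|(E_k')>1-2\eta^{1/4}$. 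Note $\|f_0(t)\|\geq 1-\eta^{1/4}$ on $E_k'$.

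The crux is building a single continuous modification $\tilde f_0\in S_{C_0(K,Y)}$ that has norm $1$ on the compact set $E:=\bigcup_{k\in A_1}E_k'$. Since $K$ is locally compact Hausdorff and $\|f_0\|\geq 1-\eta^{1/4}$ on $E$, choose (by Urysohn) $\psi\in C_c(K,[0,1])$ with $\psi\equiv 1$ on $E$ and $\mathrm{supp}\,\psi\subset U$ for some relatively compact open neighborhood $U$ of $E$ on which $\|f_0\|>1/2$, and set
\[
\tilde f_0(t)=f_0(t)\Bigl(1+\psi(t)\bigl(1/\|f_0(t)\|-1\bigr)\Bigr)\ (t\in U),\qquad \tilde f_0=f_0\ \text{elsewhere}.
\]
A direct pointwise calculation gives $\tilde f_0\in S_{C_0(K,Y)}$, $\|\tilde f_0-f_0\|_\infty<\eps$, and $\|\tilde f_0(t)\|=1$ on $E$. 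For each $k\in A_1$ define $\tilde g_k(t):=J(\tilde f_0(t))$ for $t\in E_k'$ (continuous and hence measurable there) and set
\[
\tilde\phi_k(f):=\frac{1}{|\phi_k|(E_k')}\int_{E_k'}\langle f(t),\tilde g_k(t)\rangle\,d|\phi_k|(t).
\]
Then $\tilde\phi_k\in S_{C_0(K,Y)^*}$ and $\tilde\phi_k(\tilde f_0)=1$ by construction. The required bound $\|\tilde\phi_k-\phi_k\|<\eps$ splits into three small contributions: the renormalization factor $1/|\phi_k|(E_k')-1$, the mass on $K\setminus E_k'$, and $\sup_{E_k'}\|\tilde g_k(t)-g_k(t)\|$; the last is controlled by applying the Bishop--Phelps--Bollob\'as estimate above to $\re g_k(t)(\tilde f_0(t))\geq 1-\eta^{1/4}$.

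The main obstacle is precisely this \emph{simultaneous} construction of one $\tilde f_0$ that is $\eps$-close to $f_0$ and of norm $1$ on every $E_k'$ at once. It succeeds because all the sets $E_k'$ lie in the common region $\{\|f_0\|\geq 1-\eta^{1/4}\}$, so the Urysohn-based normalization introduces only an $O(\eta^{1/4})$ error uniformly in $k$. Choosing $\eta$ as a function of $\eps$ through the modulus $\delta_{Y^*}$ then completes the proof.
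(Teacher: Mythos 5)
Your strategy is sound and, at its technical core, runs parallel to the paper's: both proofs normalize $f_0$ via Urysohn on the region where it is nearly norm-one, replace the $Y^*$-valued densities by the duality map evaluated at the normalized function, and control the resulting perturbation of the functionals through the uniform convexity of $Y^*$ (the implication $\re g(y)>1-\delta_{Y^*}(\eps)\Rightarrow\|g-J(y)\|<\eps$ is exactly the estimate the paper uses). The packaging is genuinely different in two respects, though. First, you verify Definition~\ref{AHSP2} directly for a convex combination of functionals, whereas the paper proves a one-functional statement --- for every $f_0$ there is a single $f_1$, depending only on $f_0$ and $\eps$, such that every $\phi$ with $\re\phi(f_0)>1-\eta$ lies within $5\eps$ of $D(f_1)$ --- and then invokes \cite[Corollary~3.4]{ABGM}. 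Your route avoids that reduction, at the cost that $\tilde f_0$ depends on the functionals $\phi_k$ through the sets $E_k'$; this is harmless for the definition but loses the uniformity the paper's $f_1$ enjoys. Second, you represent an arbitrary $\phi\in C_0(K,Y)^*$ as $\int_K\langle f(t),g(t)\rangle\,d|\phi|(t)$ with a unit-norm Radon--Nikod\'ym density $g$, while the paper works in $C_0(K)^*\hat{\otimes}_\pi Y^*$ and first approximates $\phi$ by a finite tensor $\sum_j\alpha_j\mu_j\otimes y_j^*$. Your representation is cleaner (no approximation property of $C_0(K)^*$ is needed and no extra $\eps$ is lost to the tensor approximation), but it rests on the Dinculeanu--Singer representation of $C_0(K,Y)^*$ together with differentiation of a vector measure against its variation, which you should state and cite explicitly.

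One step is wrong as written: you take $U$ to be a relatively compact open neighborhood of $E$ on which merely $\|f_0\|>1/2$, and then claim $\|\tilde f_0-f_0\|_\infty<\eps$. Pointwise one has $\|\tilde f_0(t)-f_0(t)\|=\psi(t)\bigl(1-\|f_0(t)\|\bigr)$, and on $U\setminus E$, where $\psi$ may be positive while $\|f_0\|$ is only known to exceed $1/2$, this is bounded only by $1/2$. The fix is the one the paper uses: take $U=\{t\in K:\|f_0(t)\|>1-2\eta^{1/4}\}$, which is open, contains $E$, and is relatively compact because $f_0$ vanishes at infinity; then $\|\tilde f_0-f_0\|_\infty\leq 2\eta^{1/4}$. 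With that correction, and after tracking constants (your bound on $\|\tilde\phi_k-\phi_k\|$ comes out as $\eps+4\eta^{1/4}$ rather than $\eps$, so the argument should be run with $\eps/2$ and $\eta$ chosen so that $\eta^{1/4}\leq\min\{\eps/8,\delta_{Y^*}(\eps/2)\}$ and $\sqrt{\eta}\leq\eps$), the proof goes through.
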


\begin{proof}
We show that for every $\eps>0$, there is $\eta>0$ such that for every $f_0\in S_{C_0(K,Y)}$ there  is $f_1\in S_{C_0(K,Y)}$ satisfying
\begin{enumerate}
\item $\|f_0 - f_1 \|<\eps$.
\item If $\phi \in S_{C_0(K,Y)^*}$ satisfies $\re \phi(f_0) > 1-\eta$, then $\dist(\phi, D(f_1))<5\eps$, where\\ $D(f_1) = \{ \psi\in S_{C_0(K,Y)^*} : \psi(f_1)=1\}$.
\end{enumerate}
Then the result follows from Corollary~3.4 of \cite{ABGM}.

Since $Y$ is uniformly smooth, $Y^*$ has the Radon-Nikod\'{y}m property, and since $C_0(K)^*$ has the Approximation property, $C_0(K, Y)^* =( C_0(K) \hat{\otimes}_\eps Y )^* = C_0(K)^*\hat{\otimes }_\pi Y^*$ \cite[Theorem~5.~33]{Ryan} (where $\hat{\otimes }_\pi$ and $\hat{\otimes}_\eps$ denote the projective and injective tensor product, respectively). Let $r(\eps) = \min \{ \eps/2, 2\delta_{Y^*}(\eps), 1/4\}$ and $\eta(\eps) = \eps r(\eps)^2$ for all $\eps\in (0,1)$, where $\delta_{Y^*}(\eps)$ is the modulus of uniform convexity of $Y^*$.
Fix $\eps\in (0,1)$ and set $\eta = \eta(\eps)$ and $r=r(\eps)$. Suppose that  $\re \phi(f_0)>1 -\eta$ for some $\phi\in S_{C_0(K,Y)^*}$ and for some $f_0\in B_{C_0(K,Y)}$. Then by the definition of the projective tensor product,  there exists  $\phi_1 = \sum_{j=1}^n \alpha_j \mu_j \otimes y^*_j$ such that $\re \phi_1(f_0)>1-\eta$,
$\|\phi_1 - \phi\|<\eps$,  $\sum_{j=1}^n \alpha_j=1$,  $\alpha_j\geq 0$, $\|y_j^*\|=\|\mu_j\|=1$ for all $j=1,\dots, n$.

Let $L= \{ t\in K : \|f_0(t)\|\geq 1-r\}$ and $U=  \{ t\in K : \|f_0(t)\|>1-2r\}$. By the Urysohn lemma, there exists a continuous function $m:K\longrightarrow [0,1]$ such that $m(t)=1$ for all $t\in L$ and $m(t)=0$ for all $t\in K\setminus U$.  Define a function $f_1\in C_0(K,Y)$ by
\begin{align*}
f_1(t)&=\frac{f_0(t)}{\|f_0(t)\|} m(t) + (1-m(t))f_0(t) & \text{if $t\in U$,}\qquad \qquad \qquad \qquad \qquad \qquad \qquad \qquad \\ f_1(t)&=f_0(t) & \text{if $t\notin U$.}\qquad \qquad \qquad \qquad \qquad \qquad \qquad \qquad
\end{align*}
Then $\|f_1(t) \| \leq 1$ for all $t\in K$ and $\|f_1(t)\|=1$ for all $t\in L$. Also,
$$
\|f_0(t) - f_1(t) \| \leq 2r\leq \eps \quad \text{for all $t\in K$.}
$$
Indeed, if $t\in U$, then
$$\|f_0(t) - f_1(t) \| = \nor{  \frac{f_0(t)}{\|f_0(t)\|}m(t) - f_0(t)m(t) } = \bigl| \|f_0(t)\|-1 \bigr| m(t) \leq 2r\leq \eps
$$
and if $t\in K\setminus U$, then $f_0(t)=f_1(t)$.

We assume that $d\mu_j = h_j d|\mu_j|$ for some Borel measurable function $h_j$ with $|h_j|=1$ for all $j=1, \dots, n$.  Then
$$
\re \phi(f_0) = \re \sum_{j=1}^n \alpha_j \int_K y_j^*(f_0(t)) h_j(t)\,  d|\mu_j|(t) > 1-\eta.
$$
Write
\[
A=\left\{ 1\leq j \leq n \,:\, \re \int_K y_j^*(f_0(t)) h_j(t)\,  d|\mu_j|(t)>1-r^2\right\}.
\]
We have $\sum_{j\in A} \alpha_j >1-\eta/r^2 = 1-\eps$. Now, for each $j\in A$, write
$$
B_j = \bigl\{ t\in K \,:\, \re [y^*_j(f_0(t))h_j(t)]>1-r\bigr\}.
$$
Then, for each $j\in A$, we have
\begin{align*}
1-r^2 &< \re \int_K y_j^*(f_0(t)) h_j(t)\,  d|\mu_j|(t)\\
&= \re \int_{B_j}  y_j^*(f_0(t)) h_j(t)\,  d|\mu_j|(t) + \re \int_{K\setminus B_j} y_j^*(f_0(t)) h_j(t)\,  d|\mu_j|(t)\\
&\leq \re \int_{B_j}  y_j^*(f_0(t)) h_j(t)\,  d|\mu_j|(t) +  \int_{K\setminus B_j}(1-r) \,  d|\mu_j|(t)\\
&\leq |\mu_j|(B_j) + (1-r) |\mu_j|(K\setminus B_j) = 1 - r\bigl(1-|\mu_j|(B_j)\bigr)
\end{align*}
and so, $|\mu_j|(B_j) > 1-r$.
By the regularity of the measures, there exists a compact set $K_j\subset B_j$ such that $|\mu_j|(K_j) > 1-r$. Set $\tilde{K} = \bigcup_{j\in A} K_j$ and observe that $\tilde K\subset L$.

For each $t\in U$, there exists a unique $f_0^*(t)\in S_{Y^*}$ such that $\inner{f_0^*(t), f_0(t)}= \|f_0(t)\|$. If $j\in A$ and $t\in B_j$, we have
\[
\re h_j(t) y_j^*(f_0(t)) > 1-r \geq (1-r)\|f_0(t)\|
\]
 and
\[
\re \inner{f_0^*(t), f_0(t)} = \|f_0(t)\|.
\]
 Hence
\[
 \re \frac{  h_j(t) y_j^* + f_0^*(t) }{2} \left(\frac{f_0(t)}{\|f_0(t)\|}\right) \geq 1-\frac r2 \geq 1-\delta_{Y^*}(\eps).
 \]
Then $\|h_j(t)y_j^*-f_0^*(t)\|_{Y^*}\leq \eps$ for all $t\in B_j$ by uniform convexity of $Y^*$.

On the other hand, and again since $Y$ is uniformly smooth, we have for each $t\in U$ and for each $f\in C_0(K,Y)$, that
\[
\inner{f_0^*(t), f(t)} = \lim_{\lambda \to 0} \frac{\left\| \lambda f(t) + \frac{f_0(t)}{\|f_0(t)\|} \right\|-1}{\lambda }.
\]
Hence the function $t\longmapsto \inner{f_0(t)^*, f(t)}$ is Borel measurable on $U$ for each $f\in C_0(K,Y)$.
For each $j\in A$, define
\[ z_j^*(f) = \frac{1}{|\mu_j|(K_j)} \int_{K_j} \inner{f_0^*(t), f(t)} d|\mu_j|(t).\]
Then $z_j^*\in B_{C_0(K,Y)^*}$ for every $j\in A$. Notice that, for each $j\in A$ and $t\in K_j$, one has $t\in L$ and $m(t)=1$. So $f_1(t)=\frac{f_0(t)}{\|f_0(t)\|}$ and $\inner{f_0^*(t), f_1(t)} = 1$. Therefore, $z_j^*(f_1) =1=\|z_j^*\|$ for all $j\in A$.

We claim that $\|z_j^* -\mu_j\otimes y_j^*\| \leq 3\eps$ for every $j\in A$. Indeed,
for $f\in C_0(K,Y)$ with $\|f\|\leq 1$, we have
\begin{align*}
|z_j^*(f) - \mu_j \otimes y_j^*(f) | &= \left|  \frac{1}{|\mu_j|(K_j)} \int_{K_j} \inner{f_0^*(t), f(t)} d|\mu_j|(t) - \int_K y_j^*(f(t))h_j(t) \, d|\mu_j|(t)   \right|\\
&\leq \left| \int_{K\setminus K_j} y_j^*(f(t))h_j(t) \, d|\mu_j|(t)\right| +  \int_{K_j}  | \inner{ f_0^*(t), f(t)} -y_j^*(f(t))h_j(t) | \, d|\mu_j|(t)  \\
&\ \ \ \ + \left (\frac{1}{|\mu_j|(K_j)} -1\right)  \int_{K_j}  |\inner{f_0^*(t), f(t)} |\,\,  d|\mu_j|(t) \\
&  \leq |\mu_j|(K\setminus K_j) +   \int_{K_j}  \| f_0^*(t)  - h_j(t)y_j^* \|_{Y^*} \, d|\mu_j|(t)   + (1- |\mu_j|(K_j))\\
&\leq r+ \eps + r\leq 3\eps.
\end{align*}
Consider $\psi = \frac{1}{\sum_{j\in A} \alpha_j } \sum_{j\in A} \alpha_j z^*_j$ and observe that $\psi\in S_{C_0(K,Y)^*}$ and  $\psi(f_1)=1$. Finally, we get $\|\psi - \phi \|\leq 5\eps$ because
\begin{align*}
\|\psi - \phi_1\| &\leq  \left\| \frac{1}{\sum_{j\in A} \alpha_j } \sum_{j\in A} \alpha_j z^*_j - \sum_{j\in A} \alpha z_j^*  \right\| + \left\| \sum_{j\in A} \alpha_j z_j^* - \sum_{j\in A} \alpha_j \mu_j\otimes y_j^* \right\| + \left\|\sum_{j\in A^c}\alpha_j \mu_j\otimes y_j^*\right\|\\
&\leq  \sum_{j\in A^c} \alpha_j  + 3\eps + \sum_{j\in A^c} \alpha_j \leq 3\eps + 2r\leq 5\eps.\qedhere
\end{align*}
\end{proof}


\begin{thebibliography}{99}

\bibitem{AAGM2}
\textsc{M.~D.~Acosta, R.~M.~Aron, D.~Garc\'ia and M.~Maestre}, The Bishop-Phelps-Bollob\'as Theorem for
 operators, \emph{J. Funct. Anal.} {\bf 254} (2008),
  2780--2799.

\bibitem{ABGM}
\textsc{M.~D.~Acosta, J.~Becerra-Guerrero,
 D.~Garc\'ia and M.~Maestre},
The Bishop-Phelps-Bollob\'as Theorem
 for bilinear forms, \emph{Trans. Amer. Math. Soc.} \textbf{365} (2013) 5911-5932.

\bibitem{AcoBecRod} \textsc{M.~D.~Acosta, J.~Becerra, and
    A.~Rodriguez-Palacios}, Weakly open sets in the unit ball of
    the projective tensor product of Banach spaces,
    \emph{J. Math. Anal. Appl.} \textbf{383} (2011), 461--473.

\bibitem{ACKLM-1}
\textsc{R.~M.~Aron, Y.~S.~Choi, S.~K.~Kim, H.~J.~Lee, and M.~Mart\'{\i}n}, The Bishop-Phelps-€"Bollob\'{a}s version of Lindenstrauss properties A and B, \emph{preprint}.


\bibitem{BKMM}
\textsc{K.~Boyko, V.~Kadets, M.~Mart\'{i}n and J.~Mer\'{i}}, Properties of lush spaces and applications to Banach spaces with numerical index 1, \emph{Studia Math.} {\bf 190} (2009), 117-133.

\bibitem{BKMW} \textsc{K.~Boyko, V.~Kadets, M.~Mart\'{\i}n, and
    D.~Werner}, Numerical index of Banach spaces and duality,
    \emph{Math. Proc. Cambridge Phil. Soc.} \textbf{142}
    (2007), 93--102.

\bibitem{C-K-M-M-R}
\textsc{M.~Chica, V.~Kadets, M.~Martin, S.~Moreno-Pulido, and F.~Rambla-Barreno}. Bishop-Phelps-Bollob\'as
  moduli of a Banach space, \emph{J. Math. Anal. Appl.} \textbf{412} (2014), 697--719.

\bibitem{ChoiKimSK}
\textsc{Y.~S.~Choi and S.~K.~Kim}, The Bishop–Phelps–Bollob\'{a}s property and lush spaces, \emph{J. Math. Anal. Appl.} \textbf{390} (2012), 549--555.

\bibitem{CS}
\textsc{Y.~S.~Choi and H.~G.~Song}, The Bishop-Phelps-Bollob\'as theorem fails for bilinear forms on $l_1 \times l_1$, \emph{J. Math. Anal. Appl.} {\bf 360} (2009), 752--753.

\bibitem{KMMP2009}
\textsc{V.~Kadets, M.~Mart\'{\i}n, J.~Mer\'{\i}, and R.~Pay\'{a}},
Convexity and smoothness of Banach spaces with numerical index one.
\emph{Illinois J. Math.} \textbf{53} (2009), 163--182.

\bibitem{LeeMartin} \textsc{H.~J.~Lee and M.~Mart\'in}, Polynomial numerical indices of Banach spaces with 1-unconditional bases, \emph{Linear Algebra and its app.}{\bf 437} (2012), 2001--2008.

\bibitem{Lindens} \textsc{J.~Lindenstrauss}, On operators which attain their norm, \emph{Israel J. Math.} {\bf 1} (1963), 139-148.

\bibitem{MartPaya-CL} \textsc{M.~Mart\'{\i}n and R.~Pay\'{a}},
On CL-spaces and almost-CL-spaces, \emph{Ark. Mat.} \textbf{42}
(2004), 107--118.

\bibitem{Moreno} \textsc{J.~P.~Moreno}, Geometry of Banach spaces with $(\alpha,\epsilon)$-property or $(\beta,\epsilon)$-property, \emph{Rocky Mount. J. Math.} \textbf{27} (1997), 241--256.



\bibitem{Ryan} \textsc{R.~A.~Ryan}, Introduction to tensor products of Banach spaces, \emph{Springer}.


\bibitem{Zizler}
\textsc{V.~Zizler}, On some extremal problems in Banach spaces, \emph{Math. Scand.} \textbf{32} (1973), 214--224.

\end{thebibliography}
\end{document}